\definecolor{darkred}{RGB}{100,0,0}
\definecolor{darkgreen}{RGB}{0,100,0}
\definecolor{darkblue}{RGB}{0,0,150}
\def\level{\cL}
\def\up{\cU}
\def\I{{\rm I}}
\def\d{{\rm d}}
\def\ball{B}
\newtheorem{thm}{Theorem}[section]
\newtheorem{prp}[thm]{Proposition}
\newtheorem{lem}[thm]{Lemma}
\theoremstyle{remark}
\newtheorem{rem}[thm]{Remark}
\def\beq{\begin{equation}} 
\def\eeq{\end{equation}}
\def\beqn{\begin{eqnarray*}}
\def\eeqn{\end{eqnarray*}}
\def\Bitem{\begin{itemize}\setlength{\itemsep}{.2in}}
\def\bitem{\begin{itemize}\setlength{\itemsep}{.05in}}
\def\eitem{\end{itemize}}
\def\Benum{\begin{enumerate}\setlength{\itemsep}{.2in}}
\def\benum{\begin{enumerate}\setlength{\itemsep}{.05in}}
\def\eenum{\end{enumerate}}
\def\bmult{\begin{multline*}}
\def\emult{\end{multline*}}
\def\bcenter{\begin{center}}
\def\ecenter{\end{center}}
\def\bframe{\begin{frame}}
\def\eframe{\end{frame}}
\newcommand{\thmref}[1]{Theorem~\ref{thm:#1}}
\newcommand{\prpref}[1]{Proposition~\ref{prp:#1}}
\newcommand{\lemref}[1]{Lemma~\ref{lem:#1}}
\newcommand{\secref}[1]{Section~\ref{sec:#1}}
\newcommand{\figref}[1]{Figure~\ref{fig:#1}}
\DeclareMathOperator{\dist}{dist}
\def\cA{\mathcal{A}}
\def\cB{\mathcal{B}}
\def\cC{\mathcal{C}}
\def\cG{\mathcal{G}}
\def\cH{\mathcal{H}}
\def\cL{\mathcal{L}}
\def\cT{\mathcal{T}}
\def\cU{\mathcal{U}}
\def\cV{\mathcal{V}}
\def\cW{\mathcal{W}}
\def\bbR{\mathbb{R}}
\def\1{\mathbbm{1}}
\definecolor{purple}{rgb}{0.4,.1,.9}
\begin{document}
\thispagestyle{empty}

\title{Moving Up the Cluster Tree with the Gradient Flow}
\author{
Ery Arias-Castro\footnote{University of California, San Diego, California, USA (\url{https://math.ucsd.edu/\~eariasca/})} 
\and 
Wanli Qiao\footnote{George Mason University, Fairfax, Virginia, USA (\url{https://mason.gmu.edu/\~wqiao/})}
}
\date{}
\maketitle

\begin{abstract}
The paper establishes a strong correspondence between two important clustering approaches that emerged in the 1970's: clustering by level sets or cluster tree as proposed by Hartigan and clustering by gradient lines or gradient flow as proposed by Fukunaga and Hostetler. We do so by showing that we can move up the cluster tree by following the gradient ascent flow.

\medskip\noindent
{\em Keywords and phrases:}
clustering; level sets; cluster tree; gradient lines; gradient flow; mean-shift algorithm; dynamical systems
\end{abstract}

\section{\uppercase{Introduction}} 
\label{sec:intro}

Up until the 1970's there were two main ways of clustering points in space. 
One of them, perhaps pioneered by \citet{pearson1894contributions}, was to fit a (usually Gaussian) mixture to the data, and that being done, classify each data point --- as well as any other point available at a later date --- according to the most likely component in the mixture. 
The other one was based on a direct partitioning of the space, most notably by minimization of the average minimum squared distance to a center: the $K$-means problem, whose computational difficulty led to a number of famous algorithms \citep{macqueen1967some, forgy1965, hartigan1979k, lloyd1982least, max1960quantizing} and likely played a role in motivating the development of hierarchical clustering \citep{ward1963hierarchical, fisher1958grouping, gower1969minimum, sneath1957application}.

In the 1970's, two decidedly nonparametric approaches to clustering were proposed, both based on the topography given by the population density. Of course, in practice, the density is estimated, often by some form of kernel density estimation.

\paragraph{Clustering via level sets}
One of these approaches is that of \citet{hartigan1975clustering}, who proposed to look at the connected components of the upper-level sets of the population density.
Thinking of clusters as ``regions of high density separated from other such regions by regions of low density", at a given level, each connected component represents a cluster, while the remaining region in space is sometimes considered as noise. 
The basic idea was definitely in the zeitgeist. For example, a similar approach was suggested around the same time by \citet{koontz1972nonparametric}.

The choice of level is not at all obvious, and in fact Hartigan recommended looking at the entire tree structure --- which he called the ``density-contour tree'' and is now better known as the {\em cluster tree} --- that arises by the nesting property of the upper-level sets considered as a whole. 
Note, however, that the cluster tree does not provide a complete partitioning of the space. 

\citet{hartigan1981consistency, hartigan1977distribution, hartigan1977clusters}, and later \cite{penrose1995single}, showed that the cluster tree can be estimated by single linkage, achieving a weak notion of consistency called fractional consistency. Since then, the estimation of cluster trees using different algorithms or notions of consistency has been studied in \citep{stuetzle2003estimating, stuetzle2010generalized, rinaldo2012stability, chaudhuri2014consistent, wang2019dbscan, eldridge2015beyond}. At a fixed level in a cluster tree, clustering is naturally related to level set estimation, which has in itself received a lot of attention in the literature, e.g., \citep{polonik1995measuring, tsybakov1997nonparametric,rigollet2009optimal,chen2017density,mason2009asymptotic,walther1997granulometric,rinaldo2010generalized,singh2009adaptive,qiao2021nonparametric,mammen2013confidence,qiao2019nonparametric}. To address the problem of choosing the level, \citep{sriperumbudur2012consistency, steinwart2015fully, steinwart2011adaptive} considered the lowest split level in cluster tree, which can be used to recover the full cluster tree when applied recursively.

\paragraph{Clustering via gradient lines}
The other approach is that of \citet{fukunaga1975}, who proposed to use the gradient lines of the population density. Simply put, assuming the density has the proper regularity (which in particular requires that it is differentiable everywhere), a point is `moved' upward along the curve of steepest ascent in the topography given by the density, and the points ending at the same critical point form a cluster. 
This gradient flow definition of clustering is particularly relevant when the density is a Morse function \citep{chacon2015population}, as in that case each local maximum has its own basin of attraction and the union of these cover the entire space except for a set of Lebesgue measure zero.

The general idea of clustering by gradient ascent has been proposed or rediscovered a few times \citep{cheng2004estimating, li2007nonparametric, roberts1997parametric}.
For a fairly recent review of this literature, see \citep{carreira2015clustering}.
And the substantial amount of research work on density modes over the last several decades, which includes \citep{hartigan1985dip, silverman1981using, minnotte1997nonparametric, dumbgen2008multiscale, burman2009multivariate, genovese2016non},
is partly motivated by this perspective on clustering.

\paragraph{Contribution}
These two approaches --- by level sets and by gradient lines --- seem intuitively related, and in fact they are discussed together in a few recent review papers \citep{menardi2016review, chacon2020modal} under the umbrella name of {\em modal clustering}.
We argue here that the gradient flow is a natural way to partition the support of the density in concordance with the cluster tree. In doing so we provide a unified perspective on modal clustering, essentially equating the use of level sets and the use of gradient lines for the purpose of clustering points in space.


\paragraph{Setting}
Both approaches to clustering that we discuss here rely on features of an underlying density which throughout the paper will be denoted by $f$. Although $f$ is typically unknown, as the sample increases in size it becomes possible to estimate it consistently under standard assumptions, and the topographical features of $f$ that determine a clustering become estimable as well. In the spirit of \citep{chacon2015population}, for example, we focus on $f$ itself, which allows us to bypass technical finite sample derivations for the benefit of providing a more concise and clear picture.

\section{\uppercase{Clustering via level sets: the cluster tree}}
Given a density $f$ with respect to the Lebesgue measure on $\bbR^d$, for a positive real number $t > 0$, the $t$-upper level set of $f$ is given by
\begin{equation}
\up_t := \{x : f(x) \ge t\}.
\end{equation}
The level set of $f$ at level $t$ is defined by
\begin{equation}\label{Lt}
\level_t := \{x : f(x) = t\}.
\end{equation}
Throughout, whether specified or not, we will only consider levels that are in $(0, \max f)$. 

Hartigan, in his classic book on clustering, suggests that ``clusters may be thought of as regions of high density separated from other such regions by regions of low density" \cite[Sec 11.13]{hartigan1975clustering}. This naturally leads him to define clusters as the connected components of a certain upper level set of the underlying density: if the level is $t$, then the clusters are the connected components of $\up_t$ as defined above. 
See Figures~\ref{fig:level_sets_1d} and~\ref{fig:level_sets_2d} for illustrations in dimension~1 and~2, respectively.

\begin{figure}[!htpb]
\centering
\includegraphics[scale=0.6]{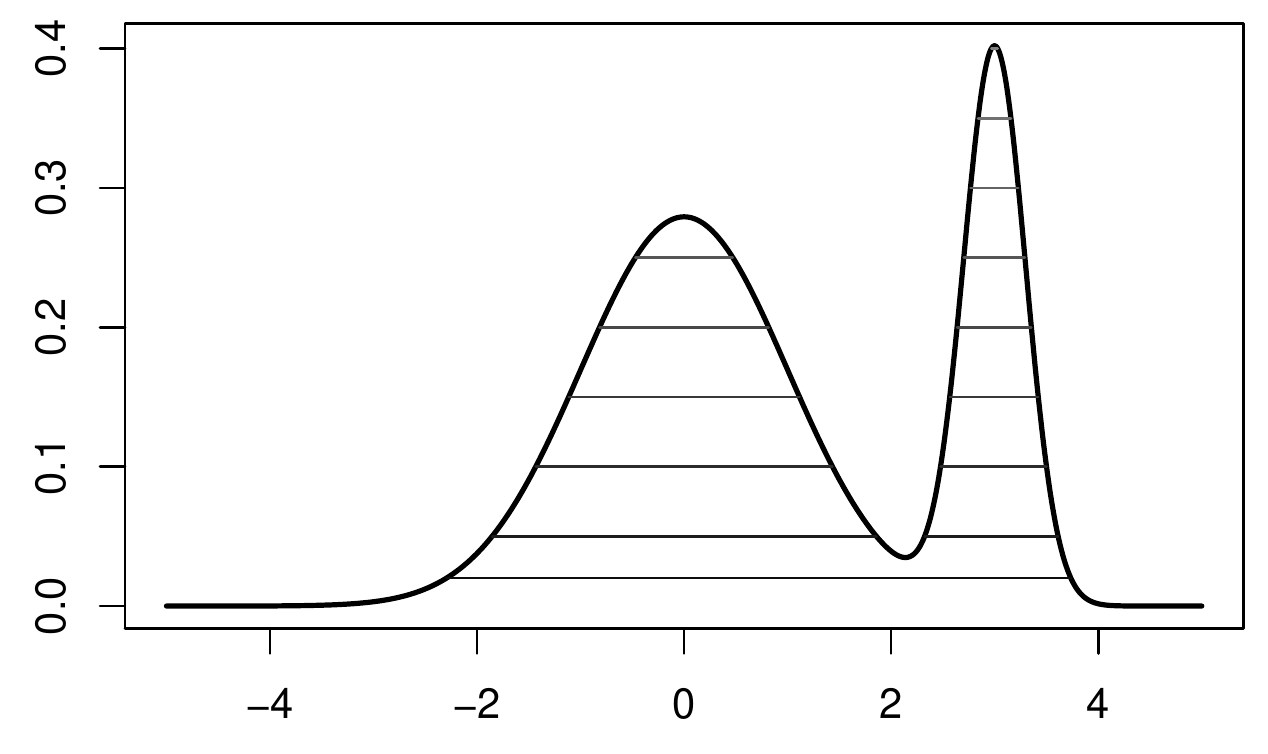}
\caption{A sample of upper level sets of a density in dimension $d = 1$ with two modes (which happens to be the mixture of two normal distributions). At any level $0 < t \le t_0$, where $t_0 \approx 0.0348$ is the value of the density at the local minimum, $\up_t$ is connected, and thus corresponds to the cluster at the level. At any level $t_0 < t \le t_1$, where $t_1 \approx 0.2792$ is the value of the density at its local maximum near $x=0$, $\up_t$ has exactly two connected components, and these are the clusters at that level. (At $t = t_1$, one of the clusters is a singleton.) Finally, at $t_1 < t \le t_2$, where $t_2 \approx 0.4021$ is the value of the density at its global maximum (near $x=3$), $\up_t$ is again connected, and is thus the cluster at that level. 
(At $t = t_2$, the cluster is a singleton.)}
\label{fig:level_sets_1d}
\end{figure}

\begin{figure}[!ht]
\centering
\includegraphics[scale=0.6]{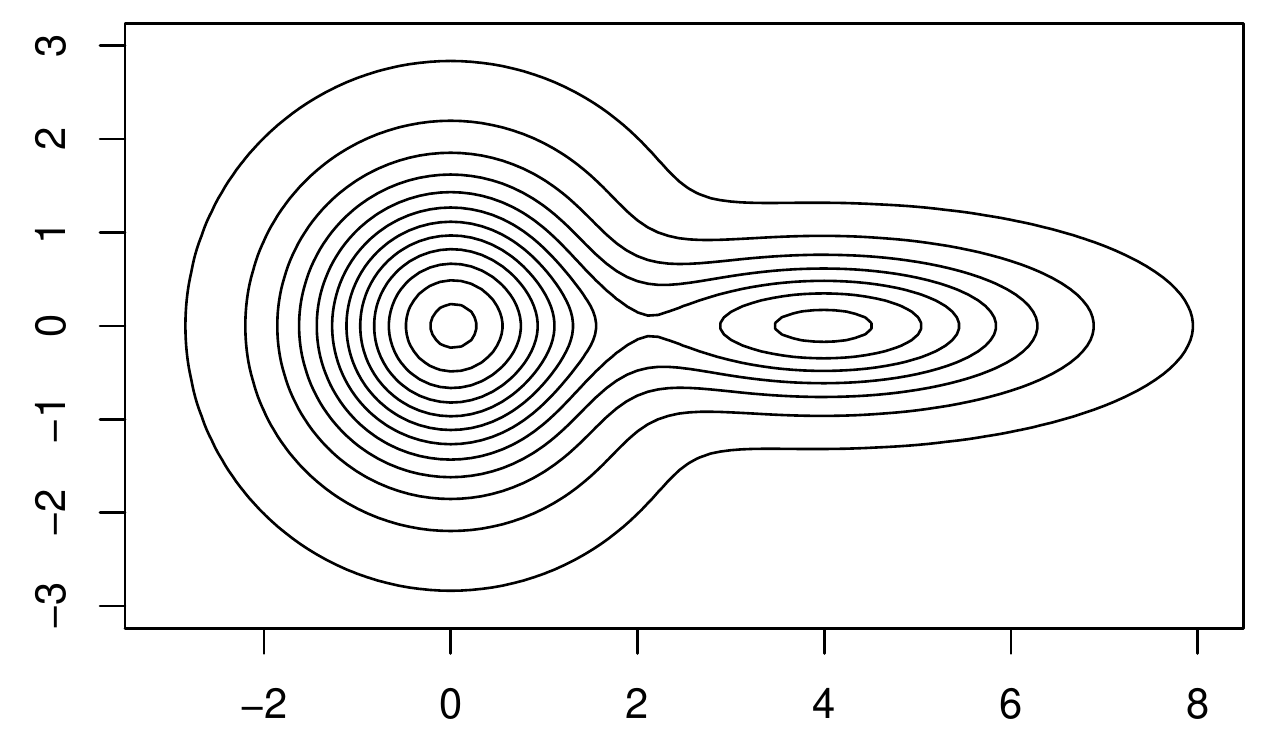}
\caption{A sample of upper level sets of a density in dimension $d = 2$ with two modes (which happens to be the mixture of two normal distributions, one with a non-scalar covariance matrix). The situation is similar to that of \figref{level_sets_1d}, where the number of connected components of the upper level set at $t$ is $=1$ one when $0 < t \le t_0$, where $t_0$ is the value of the density at the saddle point; $=2$ when $t_0 < t \le t_1$, where $t_1$ is the value of the density at the local (but not global) maximum; and $=1$ again when $t_1 < t \le t_2$, where $t_2$ is the maximum value of the density.} 
\label{fig:level_sets_2d}
\end{figure}

The choice of level $t$ is rather unclear in this definition, but can be determined by the number of clusters, which in turn is often set by the data analyst. Indeed, the situation is very much like that in hierarchical clustering: there is a tree structure. This structure comes from the nesting property of upper level sets where $\up_s \subset \up_t$ whenever $s \ge t$, which also implies that each cluster at level $s$ is included in a cluster at level $t$. The set of all cluster (each one being the connected component of an upper level set) equipped with this tree structure or partial ordering is what is called the {\em cluster tree}\footnote{~Here we consider the continuous cluster tree that includes all levels. In some other works, the levels are restricted to those where a topological change occurs.} --- and what Hartigan calls the ``density-contour tree''.
Note that the root represents the entire population while the leaves are the modes (i.e., local maxima).

The clusters at a particular level do not constitute a partition of the population. Indeed, regardless of $t > 0$, the clusters at level $t$, meaning the connected components of $\up_t$, form a partition of $\up_t$ itself, obviously, but not a partition of $\bbR^d$ since $\up_t \subsetneq \bbR^d$. And the cluster tree is only an organization of all the clusters at all levels, and thus also fails to provide a partition of the population.
According to a recent review paper by \citet{menardi2016review}, the region outside the upper level set of interest is dealt with via (supervised) classification. Suppose the level is chosen in some way, perhaps according to the desired number of clusters, and denoted $t$. The connected components of $\up_t$ are then computed. Then each point in $\bbR^d \setminus \up_t$ is assigned to one of these clusters by some method for classification, the simplest one being by proximity (a point is assigned to the closest cluster). 

\section{\uppercase{Clustering via gradient lines: the gradient flow}}

To talk about gradient lines, we need to assume that the population density $f$ is differentiable. The gradient ascent line starting at a point $x$ is the curve given by the image of $\gamma_x$, the parameterized curve defined by the following ordinary differential equation (ODE)
\begin{equation} \label{gradient_flow}
\gamma_x(0) = x; \quad
\dot\gamma_x(t) = \nabla f(\gamma_x(t)).
\end{equation} 
By standard existence and uniqueness results for ODEs \citep[Ch 17]{hirsch2012differential}, if $\nabla f$ is locally Lipschitz, this curve exists and is unique, and it is defined on $[0,\infty)$ with $\gamma_x(t)$ converging to a critical point of $f$ as $t \to \infty$. 
See \figref{gradient_lines_2d} for an illustration.
Henceforth, we assume that $f$ is twice continuously differentiable, which is certainly enough for such gradient lines to exist.

\begin{figure}[!ht]
\centering
\includegraphics[scale=0.6]{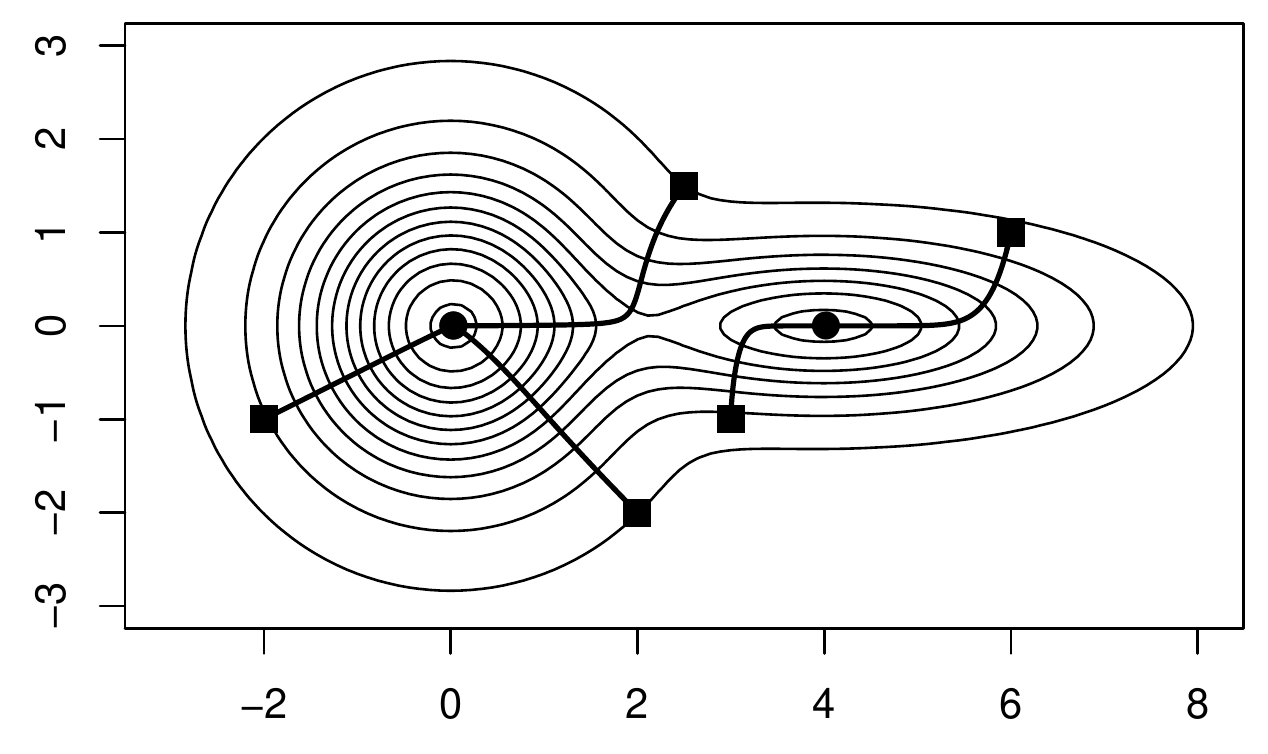}
\caption{A sample of gradient ascent lines for the density of \figref{level_sets_2d}. The square points are the starting points, while the round points are the end points, which are not only critical points but also local maxima (modes) in this example.} 
\label{fig:gradient_lines_2d}
\end{figure}

It is intuitive to define clusters based on local maxima, and \citet{fukunaga1975} suggest to ``assign each [point] to the nearest mode along the direction of the gradient" --- as opposed to the closest mode in Euclidean distance, for instance.
Define the basin of attraction of a critical point $x_0$ as $\{x: \gamma_x(\infty) = x_0\}$.
It turns out that, if $f$ is of Morse type \citep{milnor1963morse} inside its support, meaning that the Hessian of $f$ at any of its critical points is non-degenerate, then all these basins of attraction, sometimes called stable manifolds, provide a partition of the entire population.
In fact, the basin of attraction of the local maxima, by themselves, cover the population, except for a set of zero measure. 
For more background on Morse functions and their use in statistics, see the recent articles of \citet{chacon2015population} and \citet{ chen2017statistical}.

\begin{rem}
In their original article, \citet{fukunaga1975} also proposed an implementation: ``The approach uses the fact that the expected value of the observations within a small region about a point can be related to the density gradient at that point." 
The procedure is now known as the {\em blurring mean-shift algorithm} after \citet{cheng1995}, who suggested what is now known as the {\em mean-shift algorithm}, which is much closer to the plug-in approach suggested in our narrative. The mean-shift algorithm, and the twin problem of estimating the gradient lines of a density, are now well-understood \citep{cheng1995, comaniciu2002mean, cheng2004estimating, arias2016estimation, carreira2000mode}. 
The behavior of the blurring mean-shift algorithm is not as well understood, although some results do exist \citep{cheng1995, carreira2008generalised, carreira2006fast}.
\end{rem}

\section{\uppercase{Relating the cluster tree and the gradient flow}}

We assume that the density is twice continuously differentiable and of Morse type within its support so that we may freely discuss the partition of the population given by the gradient ascent lines. 

For the same population, consider the cluster tree. We saw that this object does not by itself provide a partition of the population, but a look at \figref{level_sets_2d} points to that possibility where, with a little imagination, we may see the contours (representing the level sets) as `moving' upward. As it happens, this can be formalized, and the result is a partition that coincides with that defined by the gradient flow.
Despite being quite intuitive, this correspondence appears to be novel. It is developed in the present section.

\subsection{The gradient flow follows the cluster tree}
\label{sec:flow_tree}
A close relationship between the cluster tree and the gradient flow can be anticipated by a look at \figref{gradient_lines_2d}, where it appears that gradient lines do not cross different clusters at the same level. This happens to be true, as we argue below.

To see the situation more clearly, take a point $x$ in the basin of attraction of a local maximum $x_*$ and let $\gamma_x$ denote the gradient line between $x$ and $x_*$ as defined in \eqref{gradient_flow}. 
Assume that $x \ne x_*$, so that the situation is not trivial.
We have assumed that $f$ is Morse, so that $x$ is (almost) generic.
The function $t \mapsto f(\gamma_x(t))$ is non-decreasing by construction and, as a consequence, $\gamma_x$ is `compatible' with the cluster tree in the sense that it does not cross from one connected component to another one at the same level. 
Indeed, for such a crossing would require an incursion in the region between the two connected components, say at level $t$, and that region is, by definition, outside the upper level set $\up_t$ and thus displays values of $f$ that are $\le t$. Crossing from one cluster to another at level $t$ would thus imply going from values of $f$ that are $>t$ when in the first cluster, to values $\le t$ when in the intermediary region, and finally to values $> t$ when entering the second cluster. 

To offer a different perspective, if $t := f(x)$, and $\cC$ is the component of $\up_t$ such that $x \in \cC$ --- so that $\cC$ is the `last' cluster containing $x$ when moving upward in the cluster tree (away from the root) --- then $\gamma_x(0) \in \cC$; $\gamma_x(s)$ belongs to a descendant of $\cC$ for all $s > 0$; and if $\cC_s$ denotes the last cluster containing $\gamma_x(s)$, then $\cC_r$ is a descendant of $\cC_s$ for all $r > s$. 

What was just said hinges on the fact that $\gamma_x$ does not pass through any critical point except at $\infty$ when reaching $x_*$, and in particular does not come into contact with any point where a level set splits. While the first part of this claim is well-known in dynamical systems theory, the second part is justified as follows.

\begin{prp} \label{prp:split}
Any point at the intersection of the closure of two connected components of $\up_t^\circ$ (the interior of $\up_t$) is a critical point. 
\end{prp}

Note that the statement is void unless $t$ is such that $\up_t$ has strictly fewer components than $\up_t^\circ$, meaning that a topological event happens at level $t$.

\subsection{The cluster tree follows the gradient flow}
\label{sec:tree_flow}
The cluster tree organizes the clusters (i.e., the connected components of the upper level sets) across all levels. We show now that the gradient flow provides a natural, almost canonical way of moving along the tree from the root (the population) to the leaves (the modes), thus reinforcing the case we are making that the cluster tree and the gradient flow are intimately related when it comes to defining a partition the entire population.

Suppose that there are no critical points at level $t$, i.e., no $x$ such that $f(x) = t$ and $\nabla f(x) = 0$. Note this is the generic situation, as the critical points form a discrete set by the fact that $f$ is Morse. Then, for $\eta > 0$ small enough, there are no critical points at any level between $t$ and $t+\eta$. By standard Morse theory \cite[Th 2.6]{nicolaescu2011invitation}, this implies that there are no `topological events' between levels $t$ and $t+\eta$ in that $\level_t$ and $\level_{t+\eta}$ are homeomorphic. In particular, these two level sets have the same number of connected components, and if $\cC_t$ is a connected component of $\level_t$, then it contains a single connected component, say $\cC_{t+\eta}$, of $\level_{t+\eta}$. Knowing all this, a natural way to move $\level_t$ to $\level_{t+\eta}$ is by (metric) projection of each component of $\level_t$ onto the component of $\level_{t+\eta}$ that it contains. It turns out that, by taking $\eta$ small enough but still positive, this operation becomes well-defined and a homeomorphism. 

\begin{thm} 
\label{thm:projection}
In the present context, for $\eta > 0$ small enough, the metric projection onto $\level_{t+\eta}$ is an homeomorphism sending $\level_t$ to $\level_{t+\eta}$. 
\end{thm}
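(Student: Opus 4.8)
The plan is to exhibit the metric projection $\pi$ onto $\level_{t+\eta}$, restricted to $\level_t$, together with an explicit continuous (indeed $C^1$) inverse --- the ``normal graph'' of $\level_t$ over $\level_{t+\eta}$ --- so that it is a homeomorphism for free. First, for the setup: since, by the standing assumption of this subsection, $f$ has no critical point at level $t$, continuity of $\nabla f$ together with compactness of the relevant level sets (here I would invoke the paper's running hypotheses ensuring that $\up_t$ is compact, e.g.\ $f$ vanishing at infinity or having compact support) gives $\delta>0$ and $c>0$ such that the slab $N:=\{x:|f(x)-t|\le\delta\}$ is compact, contains no critical point, and satisfies $\|\nabla f\|\ge c$ on $N$. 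Hence every $\level_{t'}$ with $|t'-t|\le\delta$ is a compact $C^2$ hypersurface, and the normalized gradient flow with velocity $\nabla f/\|\nabla f\|^2$ --- along which $f$ increases at unit rate while points move at speed at most $1/c$ --- carries $\level_t$ onto $\level_{t'}$; in particular the Hausdorff distance between $\level_t$ and $\level_{t+\eta}$ is at most $\eta/c$.

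Next, I would set up a uniform tubular neighbourhood. Using the standard lower bound for the reach of a regular $C^2$ level set in terms of $\inf_N\|\nabla f\|$ and $\sup_N\|\nabla^2 f\|$ (equivalently, the inverse function theorem applied to the normal exponential map, with estimates uniform over the compact $N$), fix $r>0$, independent of $\eta$ for $0<\eta\le\delta$, so that the normal exponential map $E(y,s):=y+s\,\nu(y)$ of $\level_{t+\eta}$, where $\nu:=\nabla f/\|\nabla f\|$, is a $C^1$ diffeomorphism from $\level_{t+\eta}\times(-2r,2r)$ onto an open tube $T$, with metric projection $\pi=\mathrm{pr}_1\circ E^{-1}$ on $T$; shrinking $r$ further (still uniformly), the uniform $C^2$ bounds on $N$ also force $\partial_s\,f(E(y,s))\ge c/2$ for all $y\in\level_{t+\eta}$ and $|s|\le 2r$. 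Now restrict to $\eta$ with $\eta/c<r$; by the Hausdorff bound above, $\level_t\subset T$, so $\pi$ is defined and continuous on all of $\level_t$.

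The last step is the graph construction. For each $y\in\level_{t+\eta}$ the map $s\mapsto f(E(y,s))$ is strictly increasing on $[-2r,2r]$, equals $t+\eta>t$ at $s=0$, and is at most $t+\eta-rc<t$ at $s=-2r$ (using $rc>\eta$), so by the implicit function theorem there is a unique $\sigma(y)\in(-2r,0)$ with $f(E(y,\sigma(y)))=t$, and $\sigma$ is $C^1$. Then $x\mapsto\pi(x)$ and $y\mapsto E(y,\sigma(y))$ are mutually inverse: given $x\in\level_t\subset T$, write $x=E(y_0,s_0)$ with $y_0=\pi(x)$; since $f\ge t+\eta$ on the half-fibre $s\ge0$ we have $s_0<0$, and $f(x)=t$ forces $s_0=\sigma(y_0)$ by uniqueness, so $E(y_0,\sigma(y_0))=x$; conversely $\pi(E(y,\sigma(y)))=y$ by the definition of $\pi$. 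Hence $\pi|_{\level_t}\colon\level_t\to\level_{t+\eta}$ is a homeomorphism, and in fact a $C^1$ diffeomorphism; note that it handles possibly disconnected level sets automatically, since $\sigma$ is defined on all of $\level_{t+\eta}$.

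The hard part will be the uniformity in $\eta$: no single step is deep, but everything hinges on producing one radius $r$ that simultaneously serves as a lower bound for the reach of $\level_{t+\eta}$ and guarantees normal-fibre monotonicity for the entire family $\{\level_{t+\eta}\}_{0<\eta\le\delta}$, and this is exactly where the uniform $C^2$ control afforded by compactness of $N$ does the real work. A secondary point, needing the paper's standing hypotheses rather than anything proved here, is the compactness of the level sets themselves, without which neither the uniform estimates nor the ``continuous bijection from a compact space to a Hausdorff space'' shortcut is available.
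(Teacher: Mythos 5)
Your proof is correct, but it takes a genuinely different route to the homeomorphism than the paper does, even though the preparatory estimates overlap heavily. Both arguments start the same way: a uniform lower bound on $\|\nabla f\|$ and a uniform upper bound on $\|\nabla^2 f\|$ on a compact slab around $\level_t$ give a uniform lower bound on ${\rm reach}(\level_{t+\eta})$ (the paper's \lemref{federer4.11}), and a Taylor/flow argument bounds $d_H(\level_t,\level_{t+\eta})$ by a constant times $\eta$. Where they part ways is the final step. The paper stops there and invokes a black-box result of Chazal and Lieutier on ``normal compatibility'': once the Hausdorff distance drops below $(2-\sqrt{2})$ times the smaller reach, both metric projections between the two surfaces are automatically homeomorphisms. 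You, instead, build the homeomorphism by hand: you set up the tubular neighborhood $E(y,s)=y+s\nu(y)$ of $\level_{t+\eta}$, check that $s\mapsto f(E(y,s))$ is strictly increasing on the fibres (so the implicit function theorem produces the $C^1$ graph $\sigma$ with $f(E(y,\sigma(y)))=t$), and then verify that $y\mapsto E(y,\sigma(y))$ is a two-sided inverse of the projection restricted to $\level_t$. Your approach is more self-contained, exploits the monotonicity of $f$ along normal fibres that level sets (unlike general pairs of sets) enjoy, and, as a bonus, yields the $C^1$-diffeomorphism conclusion directly --- something the paper proves separately as \thmref{diffeomorphism} via a much more laborious telescoping/Taylor computation for the regularity of the fibre-length function $\tau$. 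The paper's route is shorter on the page given the citation, and its normal-compatibility lemma applies to arbitrary compact positive-reach sets rather than just level sets. Two small points worth tightening in yours: the Hessian bound you need is really over a compact neighbourhood of the slab $N$ rather than over $N$ itself (so that the fibres $y+s\nu(y)$, $|s|\le 2r$, stay inside the region where the bound holds), and the compactness of $\level_t$, which you rightly flag, is indeed used without comment in the paper's own proof (``$\level_t$ is compact'').
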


\begin{rem}
Henceforth, the projection of $\level_t$ onto $\level_{t+\eta}$ will be denoted $P_{t, \eta}$. The previous proposition states that $P_{t,\eta}$ is a homeomorphism when there are no critical points at level $t$ and $\eta$ is small enough, but in fact $P_{t,\eta}$ is a diffeomorphism under the same circumstances. 
See \thmref{diffeomorphism} in \secref{finer}. 
\end{rem}

The metric projection $P_{t, \eta}$ has been defined when $\cL_t$ is regular level set and $\eta$ is small enough. Even when $t$ is a critical value of $f$, meaning that $\cL_t$ contains a critical point, for any non-critical point $x\in\cL_t$, the projection $P_{t, \eta}(x)$ is still well defined, if $\eta$ is small enough.

\begin{prp}
\label{prp:singleton}
For any $x \in \level_t$ such that $\|\nabla f(x)\|\neq 0$, $P_{t, \eta}(x)$ is a singleton for $\eta$ small enough.
\end{prp} 

This metric projection is in fact related to gradient ascent in the following way: in the infinitesimal regime where $\eta \to 0$, the transformation coincides with the gradient flow at a certain speed specified below. 

\begin{prp} \label{prp:projection_infinitesimal}
For any $x \in \level_t$ such that $\|\nabla f(x)\|\neq 0$,
\[\frac{P_{t,\eta}(x) - x}\eta \to \frac{\nabla f(x)}{\|\nabla f(x)\|^2}, \quad \eta \to 0.\]
\end{prp}

%

For a more global (rather than local) relationship between the level sets and the gradient lines, consider the following gradient flow
\begin{equation} \label{gradient_flow_norm}
\begin{gathered}
\zeta_x(0) = x; \;\;\;
\dot\zeta_x(t) = 
\begin{cases}
\frac{\nabla f(\zeta_x(t))}{\|\nabla f(\zeta_x(t))\|^2}, & \text{if } \nabla f(\zeta_x(t)) \ne 0; \\
0, & \text{otherwise}.
\end{cases}
\end{gathered}
\end{equation} 
Starting at the same point $x$, the gradient line defined by $\zeta_x$ is the same as the gradient line defined by $\gamma_x$ given in \eqref{gradient_flow}, but it is traveled at a different speed.

If the metric projection is applied iteratively, then its local approximation to the gradient ascent, as revealed in \prpref{projection_infinitesimal}, can be extended to a global approximation to the gradient flow in \eqref{gradient_flow_norm}. More precisely, starting from a regular point $x_0=x$, we can define a sequence $x_0,x_1,\dots,$ such that $x_{j+1} = P_{t_j,\eta}(x_j)$, where $t_j =f(x_j)$, and intuitively, the polygonal line connecting the sequence approximates the gradient flow $\zeta_{x}$. The consistency of this approximation is rigorously analyzed in our subsequent paper \citep{arias2021asymptotic}. For $\eta$ small enough, the metric projection can be iteratively applied if the trajectory of $\zeta_{x}$ stays away from critical points, until it enters a leaf node of the cluster tree and gets close to a local mode. The metric projections thus push almost all the points to the same leaf nodes as the corresponding gradient flow does. As $\eta\to0$, this operation on the cluster tree coincides with clustering based on the gradient flow.

The gradient flow defined in \eqref{gradient_flow} is the vector field corresponding to $F(x) = \nabla f(x)$, while the variant of \eqref{gradient_flow_norm} corresponds to $F(x) = \nabla f(x)/\|\nabla f(x)\|^2$.
\citet{fukunaga1975} proposed to use a different variant: the one based on $F(x) = \nabla f(x)/f(x)$, the idea being to quickly move points in low density regions to higher density regions. 
When relating the gradient lines to the level sets, this variant of the gradient flow is particularly compelling because it is, in effect, parameterized by the level.

\begin{thm}\label{thm:zeta}
The gradient flow given in \eqref{gradient_flow_norm} has the following property. Starting from a point $x$ at level $t > 0$, meaning that $f(x)=t$, it holds that $f(\zeta_x(s-t)) = s$ for all $t < s < \max f$ as long as $\nabla f(y)\neq 0$ for all $y\in\{\zeta_x(\tau):\tau\in[0,s]\}$. 
In fact, the transformation $x \mapsto \zeta_x(s-t)$ provides a homeomorphism from $\level_t$ to $\level_s$ whenever there are no critical points at any level anywhere between $t$ and $s$, inclusive.
\end{thm}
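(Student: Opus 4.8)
\emph{Proof sketch.} The plan is to handle the two claims separately: the level-preservation identity follows from a one-line chain-rule computation, and the homeomorphism claim then follows by feeding that identity into the standard theory of flows of locally Lipschitz vector fields. Since $f$ is $C^2$, the vector field $V(y) := \nabla f(y)/\|\nabla f(y)\|^2$ is $C^1$ — in particular locally Lipschitz — on the open set $\{\nabla f \neq 0\}$, so the ODE \eqref{gradient_flow_norm} started at a regular point $x$ has a unique maximal solution $\zeta_x$, with $\dot\zeta_x = V(\zeta_x)$ as long as the trajectory avoids critical points. On any such interval the chain rule gives
\[
\frac{d}{d\tau}\, f(\zeta_x(\tau)) = \nabla f(\zeta_x(\tau)) \cdot \dot\zeta_x(\tau) = \nabla f(\zeta_x(\tau)) \cdot \frac{\nabla f(\zeta_x(\tau))}{\|\nabla f(\zeta_x(\tau))\|^2} = 1,
\]
hence $f(\zeta_x(\tau)) = f(x) + \tau = t + \tau$, and taking $\tau = s-t$ yields $f(\zeta_x(s-t)) = s$, provided the solution survives to time $s-t$. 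That it does is seen as follows: by the identity just derived $f(\zeta_x(\tau)) = t+\tau < s < \max f$ stays bounded for $\tau \in [0,s-t)$, so under the standing assumptions the trajectory remains in the compact set $\{x : t \le f(x) \le s\}$, on which (by hypothesis) $\nabla f$ does not vanish; thus $\|\nabla f\|$ is bounded below there, the speed $1/\|\nabla f\|$ is bounded above, and the solution cannot blow up before time $s-t$.

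For the homeomorphism claim, assume there are no critical points at any level in $[t,s]$. Using compactness of the upper level sets together with the isolatedness of critical points, one checks that for small $\varepsilon > 0$ the set $K := \{x : t - \varepsilon \le f(x) \le s + \varepsilon\}$ is compact and free of critical points, so $V$ is $C^1$ on a neighborhood of $K$ and $\|\nabla f\| \ge c > 0$ on $K$. By the first part, $x \mapsto \zeta_x(s-t)$ is a well-defined map $\level_t \to \level_s$; since all trajectories issuing from a neighborhood of $\level_t$ are defined on $[0,s-t]$ and $V$ is locally Lipschitz, continuous dependence of ODE solutions on the initial condition makes this map continuous. Its inverse is the time-$(s-t)$ flow of $-V$, i.e. $y \mapsto \xi_y(s-t)$ where $\dot\xi_y = -V(\xi_y)$, $\xi_y(0) = y$: the same chain-rule computation gives $f(\xi_y(\tau)) = s - \tau$ and the same speed bound extends $\xi_y$ to $[0,s-t]$, so this map sends $\level_s$ into $\level_t$, and the group property of the (autonomous, hence reversible) flow shows the two maps are mutually inverse. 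Thus $x \mapsto \zeta_x(s-t)$ is a continuous bijection; since $\level_t$ is compact (a closed subset of $K$) and $\level_s \subseteq \bbR^d$ is Hausdorff, it is automatically a homeomorphism. Alternatively, continuity of the inverse follows from the same continuous-dependence argument applied to the backward flow.

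The computation $\tfrac{d}{d\tau} f(\zeta_x(\tau)) = 1$ is the crux; everything else is bookkeeping with standard ODE facts (existence and uniqueness, continuous dependence, reversibility, the group property). The one point that genuinely uses the hypotheses — and the place to be careful — is the extension of the forward trajectories from $\level_t$, and of the backward trajectories from $\level_s$, to the whole interval $[0,s-t]$: this is exactly where ``no critical points at levels in $[t,s]$'' together with compactness of the slab between the two level sets is invoked, to obtain a uniform lower bound on $\|\nabla f\|$ and thereby rule out finite-time escape to infinity or into the critical set.
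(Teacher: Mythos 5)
Your proof is correct and follows essentially the same route as the paper: the same chain-rule computation shows the flow $\zeta$ advances the level at unit speed, and the same device — running the time-reversed flow $-V$ — supplies the inverse map, with continuous dependence of ODE solutions on initial data providing continuity. The one noteworthy shortcut is your observation that a continuous bijection from the compact set $\level_t$ to the Hausdorff space $\level_s$ is automatically a homeomorphism, which lets you skip the separate continuity argument for the inverse that the paper carries out (by the analogous continuous-dependence estimate applied to the descent flow $\zeta^\downarrow$); you do also note that fallback. Your explicit no-blowup argument on the compact slab $\{t-\varepsilon \le f \le s+\varepsilon\}$ is a slightly more careful treatment of solution survival than the paper gives, which is welcome.
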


\begin{rem}
It turns out that the same transformation is a diffeomorphism under the same circumstances. 
See \thmref{diffeomorphism2} in \secref{finer}. 
\end{rem}

\paragraph{Level set methods} 
There is another interesting connection between the cluster tree and partial differential equations that offers a different perspective on the gradient flow \eqref{gradient_flow_norm}. We presented this flow as a natural way of moving along the cluster tree, from root to leaves, which is how we understand the cluster tree as enabling a partitioning of the entire space. We say this with full awareness that our results are limited in that we only show in \thmref{zeta} that this gradient flow moves level sets between which there are no topological changes. 

Moving past a topological change presents a real challenge, and it turns out that this problem has been addressed in the PDEs literature, where researchers were confronted with such challenges when moving `fronts' (i.e., surfaces of co-dimension~1, for examples, curves in the plane) according to some motion, most notably some form of motion under curvature. \cite{osher1988fronts} pioneered an approach which consists in representing the moving front, say $(\Gamma_t : t \ge 0)$, as the zero level set of a time-varying function, say $\phi$, so that $\Gamma_t = \{x : \phi(x, t) = 0\}$. The evolution of the moving front is then implemented via an evolution of the representing function. This approach has lead to what is nowadays appropriately known as {\em level set methods} \citep{osher2003level, sethian1999level}.

To see how this method is applied, suppose we want to evolve a simple closed curve $\Gamma$ along its normal inward direction at velocity $v$. (The function $v$ is defined on the entire space where the evolution takes place.)
Take a function $\phi$ such that $\Gamma = \{x : \phi(x,0) = 0\}$ and then evolve $\phi$ as follows
\begin{align}
\dot\phi + v \|\nabla\phi\| = 0.
\end{align}
Then $\Gamma_t := \{x : \phi(x,t) = 0\}$ evolves $\Gamma_0 = \Gamma$ as prescribed.

Coming back to our setting, choose $\phi(x,t) = f(x) -t$, which provides indeed a representation of $\level_t$ for all $t$. Note that the movement induced by $\phi$ is exceedingly simple, as it consists of moving the landscape given by $f$ vertically at constant speed.  And it turns out that, since $\dot\phi = -1$ and $\nabla\phi = \nabla f$, this movement is exactly the one induced by $\zeta$, at least in between critical levels. (To be sure, the normal to $\level_t$ at some $x$ is $N(x) = \nabla f(x)/\|\nabla f(x)\|$, so that $\zeta$ is given by $F(x) = v(x) N(x)$ with $v(x) := 1/\|\nabla f(x)\|$ standing for the velocity.)

\section{\uppercase{Discussion}}
\label{sec:discussion}

In this paper we have established what we regard as an interesting and perhaps overlooked correspondence between level set clustering and gradient line clustering, which brings these two views on clustering quite close together, in our view.
In view of this correspondence, it is quite natural to discuss both approaches together, perhaps under the name of {\em modal clustering}, a term that has been already used to refer to a class of related methods \citep{menardi2016review, chacon2020modal}, which includes single-linkage clustering \cite{hartigan1981consistency, hartigan1977distribution, hartigan1977clusters, penrose1995single} as well as some methods based on nearest neighbors \citep{stuetzle2003estimating, stuetzle2010generalized} such as DBSCAN \citep{ester1996density} and even some forms of spectral clustering \citep{arias2011clustering}.

The correspondence we crafted between the cluster tree and the gradient ascent flow could, however, be strengthened, for example by showing that the transition over level sets that contain a saddle point does not fundamentally change anything. Like the multi-pronged connection that we describe in this paper --- in particular in the form of \thmref{projection} and \thmref{zeta} --- it seems intuitively clear that the `crossing' of such a level set does not affect things in a substantial way. But this remains to be established with mathematical rigor. In any case, the message that we conveyed in \secref{flow_tree} remains valid: clustering by gradient ascent is compatible with the cluster tree. 
In recent work \cite{arias2021asymptotic}, we take a different approach --- alluded to earlier --- which allows us to draw another strong correspondence between level set clustering and gradient line clustering while, this time, completely avoiding the issue of dealing with change in topology of level sets.

Although we draw a correspondence between these two views on clustering --- in terms of how they define the task of clustering itself --- they obviously remain distinct, and may inform methodology in different ways. For example, they can be combined to group what are deemed inessential modes, and subsequently, the associated clusters. We can imagine the following algorithm, which offers a principled way based on the gradient flow to deal with the region outside the chosen upper level set --- what \citet{stuetzle2010generalized} call the `fluff'. 
Given a threshold $t>0$,
\begin{enumerate}\itemsep0em
\item Compute the upper level sets at level $t$.
\item Partition the population according to the basins of attraction of the modes. 
\begin{enumerate}\itemsep0em
\item If two modes are in the same upper $t$-level set cluster, merge their basins.
\item If a mode is not in any upper $t$-level set cluster, then its basin is considered `noise'.
\end{enumerate} 
\end{enumerate}
The threshold $t$ can be chosen as it would in a purely level set based clustering algorithm. 

\newpage
\section{\uppercase{Proofs}}
\label{sec:proofs}

\subsection{Preliminaries}

For a point $x$ and $\delta > 0$, $\ball(x,\delta)$ is the open ball centered at $x$ of radius $\delta$.
For $\cA \subset \bbR^d$ and $\delta > 0$, define its $\delta$-neighborhood as 
\begin{equation}
\ball(\cA, \delta) 
:= \bigcup_{a \in \cA} \ball(a, \delta)
= \{x : \dist(x, \cV) < \delta\}, 
\end{equation}
where $\dist(x, \cA) := \inf\{\|x-a\| : a \in \cA\}$. Denote the closure and interior of $\cA$ by $\bar{\cA}$ and $\cA^\circ$, respectively. 
The Hausdorff distance between $\cA, \cB \subset \bbR^d$ is defined as
\begin{equation} \label{hausdorff}
d_H(\cA, \cB) 
:= \max\big\{d_H(\cA \mid \cB), d_H(\cB \mid \cA)\big\},
\end{equation}
where
\begin{equation}
d_H(\cA \mid \cB)
:= \sup_{a\in \cA} \inf_{b \in \cB} \|a-b\|.
\end{equation}
The (metric) projection of a point $x$ onto a closed set $\cA$ is the subset points in $\cA$ that are closest to $x$. (That subset is nonempty if $\cA$ is non-empty.) 
We say that $x$ has a unique projection onto $\cA$ if its projection is a singleton. 
The reach of $\cA \subset \bbR^d$, ${\rm reach}(\cA)$, is defined as the supremum over $\delta>0$ such that every point in $\ball(\cA, \delta)$ has a unique projection onto $\cA$ \citep{federer1959curvature}. Many things are known about the reach of a set. We will use the following properties.

The following is an immediate consequence of \citep[Lem 4.8(2)]{federer1959curvature}.
\begin{lem} \label{lem:federer4.8(2)}
Suppose $\cA \subset \bbR^d$ is a $(d-1)$-dimensional differentiable manifold with positive reach $r > 0$. Let $P$ stand for the metric projection onto $\cA$, which is well-defined as a function on the $r$-neighborhood of $\cA$. Then for any $x \in \ball(\cA, r)$, $P(x)-x$ is orthogonal to the tangent space of $\cA$ at $P(x)$. 
\end{lem}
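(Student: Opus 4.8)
The plan is to reduce the statement to the elementary first-order optimality condition for the squared distance function, using positive reach only to ensure that the projection is single-valued. Fix $x \in \ball(\cA, r)$ and set $a := P(x)$, which is a genuine point (not a set) precisely because $r \le {\rm reach}(\cA)$, so that $P$ is a well-defined function on $\ball(\cA, r)$. By definition of the metric projection, $a$ is a global minimizer of the map $b \mapsto \|x - b\|^2$ over $b \in \cA$.

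Next I would localize along a curve. Let $v$ be an arbitrary vector in the tangent space $T_a\cA$. Since $\cA$ is a $(d-1)$-dimensional differentiable manifold (without boundary), there is a differentiable curve $c : (-\varepsilon,\varepsilon) \to \cA$ with $c(0) = a$ and $\dot c(0) = v$ (take a chart at $a$ and pull back a straight line through the origin in the chart). The scalar function $g(s) := \|x - c(s)\|^2$ is differentiable at $0$, being the composition of the smooth map $y \mapsto \|x-y\|^2$ with $c$. Because $a = c(0)$ globally minimizes $\|x - \cdot\|^2$ on $\cA$, it is in particular a local minimizer of $g$, so $g'(0) = 0$. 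Since $g'(0) = -2\langle x - a, \dot c(0)\rangle = -2\langle x - a, v\rangle$, we get $\langle x - a, v\rangle = 0$. As $v \in T_a\cA$ was arbitrary, $x - a$ is orthogonal to $T_a\cA = T_{P(x)}\cA$; orthogonality to a subspace is insensitive to sign, so $P(x) - x \perp T_{P(x)}\cA$, which is the claim.

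As the excerpt indicates, an alternative is to quote Federer's Lemma 4.8(2) directly: for a set $\cA$ of positive reach, whenever $a = P(x)$ one has $x - a \in \mathrm{Nor}(\cA, a)$, the normal cone of $\cA$ at $a$; and for a $C^1$ manifold of dimension $d-1$ this normal cone is exactly the one-dimensional orthogonal complement $(T_a\cA)^\perp$. The only step requiring a word of care in that route is the identification of Federer's tangent and normal cones with the tangent space and its orthogonal complement for a differentiable manifold — which is why the curve argument above, which bypasses that machinery, is perhaps cleaner. In either case there is no substantial obstacle: the content is the classical fact that the displacement vector from an exterior point to its nearest point on a smooth hypersurface is normal to that hypersurface, and positive reach enters only to make ``nearest point'' unambiguous on the whole $r$-neighborhood.
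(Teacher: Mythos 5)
Your argument is correct. The paper itself offers no proof of this lemma: it simply declares it an immediate consequence of Federer's Lemma 4.8(2), which states that for a set of positive reach the vector $x - P(x)$ lies in Federer's normal cone $\mathrm{Nor}(\cA, P(x))$; specializing to a $C^1$ hypersurface, that cone is the orthogonal complement of the tangent space, which is exactly the second route you sketch. Your primary argument is a genuinely different and more elementary one: you use positive reach only to make $P$ single-valued, and then derive orthogonality from the first-order optimality condition for $g(s) = \|x - c(s)\|^2$ along an arbitrary differentiable curve $c$ in $\cA$ through $P(x)$ with prescribed velocity. This buys self-containedness and sidesteps the identification of Federer's generalized tangent and normal cones with the classical tangent space and its complement, which is the one slightly delicate point in the citation route; what it gives up is generality, since Federer's lemma applies to arbitrary positive-reach sets, not just $C^1$ manifolds (though only the manifold case is needed here). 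Two minor observations: the existence of a curve realizing each tangent vector requires the manifold to be at least $C^1$, which is what ``differentiable manifold'' means in the paper's usage; and, as you note, orthogonality of the displacement to the tangent space at a nearest point holds for \emph{any} nearest point even without unique projection, so positive reach truly enters only to make $P$ a function. No gaps.
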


The following is an immediate consequence of \citep[Lem 4.11]{federer1959curvature}.
\begin{lem} \label{lem:federer4.11}
Suppose the density $f$ is twice continuously differentiable. For any $t>0$, if there are no critical point on $\level_t$ defined in \eqref{Lt}, then ${\rm reach}(\level_t) > 0$. 
\end{lem}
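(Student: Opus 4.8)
The plan is to recognize the statement as the special case of Federer's \citep[Lem 4.11]{federer1959curvature} in which the defining function is $f$ itself, to check that the hypotheses of that lemma are met, and to indicate the short self-contained estimate behind it. Since $f$ is twice continuously differentiable, $\nabla f$ is locally Lipschitz, so $f$ is of class $C^{1,1}$ near $\level_t$. The assumption that there is no critical point on $\level_t$ says exactly that $\nabla f(x) \ne 0$ for every $x \in \level_t$, and by continuity $\nabla f$ stays nonzero on an open neighborhood $U$ of $\level_t$. Hence $\level_t$ is a regular level set: by the implicit function theorem it is a closed, properly embedded $C^1$ (indeed $C^{1,1}$) hypersurface of $\bbR^d$ carrying the locally Lipschitz unit normal field $N(x) := \nabla f(x)/\|\nabla f(x)\|$. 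These are precisely the ingredients needed to invoke Federer's lemma and conclude ${\rm reach}(\level_t) > 0$.

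For the self-contained version, the point is a quantitative two-sided ball condition. Fix $x, y \in \level_t$ with the segment $[x,y] \subset U$. The standard $C^{1,1}$ Taylor bound (Taylor with integral remainder, using a bound $C$ on $\|\nabla^2 f\|$ along $[x,y]$) gives $|f(y) - f(x) - \langle \nabla f(x), y-x\rangle| \le \tfrac{1}{2} C \|y-x\|^2$. Since $f(x) = f(y) = t$, this reduces to $|\langle \nabla f(x), y-x\rangle| \le \tfrac{1}{2}C\|y-x\|^2$, and dividing by $\|\nabla f(x)\| \ge m := \inf_{\level_t} \|\nabla f\|$ yields $|\langle N(x), y-x\rangle| \le \tfrac{C}{2m}\|y-x\|^2$. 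With $r := m/C$, this is exactly the statement that, at every $x \in \level_t$, each of the two open balls of radius $r$ tangent to $\level_t$ at $x$ (those centered at $x \pm r N(x)$) avoids $\level_t$; by the standard characterization of the reach of a $C^{1,1}$ hypersurface via such tangent balls \citep{federer1959curvature}, we get ${\rm reach}(\level_t) \ge m/C > 0$.

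The step I expect to be the real obstacle --- and the one \citep[Lem 4.11]{federer1959curvature} is designed to absorb --- is the passage from this pointwise estimate to a genuinely positive reach, i.e.\ uniformity. One needs $m = \inf_{\level_t}\|\nabla f\| > 0$ and $\|\nabla^2 f\|$ bounded on a uniform neighborhood $\ball(\level_t,\rho) \subset U$, and one must also handle pairs $x, y \in \level_t$ that are far apart --- for instance, distinct sheets of $\level_t$ running close to one another, which could in principle drive the reach to $0$. All of this is automatic when $\level_t$ is compact: it then has finitely many components, pairwise at positive distance, and the constants $m$ and $C$ are attained; moreover pairs with $\|y-x\| > 2r$ cannot violate the ball condition, so only nearby pairs --- for which $[x,y]$ stays in a tubular neighborhood --- matter. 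Compactness of $\level_t$ holds when $f$ has compact support, and more generally under a mild decay assumption on $f$; by contrast, $\level_t \subset \up_{t/2}$ only forces $\level_t$ to have finite Lebesgue measure, which is not enough by itself. Under such a hypothesis, either the estimate above or a direct appeal to Federer's lemma gives ${\rm reach}(\level_t) > 0$.
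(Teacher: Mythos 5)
Your proof takes the same route as the paper's, which presents the lemma as an ``immediate consequence of'' Federer's Lemma 4.11 and gives no further argument; your hypothesis checks and the self-contained tangent-ball estimate are a faithful unpacking of that citation. Your closing caveat is the one substantive point worth flagging: Federer's Lemma 4.11, as the paper itself applies it in the proof of \prpref{singleton}, yields only a pointwise lower bound on the local reach ${\rm reach}(\level_t, y)$, and passing to ${\rm reach}(\level_t) > 0$ requires that bound to be uniform over $\level_t$. You are correct that $f$ being a density does not by itself make $\level_t$ compact --- $\up_{t/2}$ having finite Lebesgue measure does not force it to be bounded --- and the paper does rely on this compactness implicitly here and explicitly later (in the proof of \thmref{projection}, where $p_0 := \min_{x \in \level_t}\|\nabla f(x)\| > 0$ is obtained by compactness of $\level_t$), without ever listing it as a hypothesis. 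So your argument is sound, and your observation identifies a genuine, if standard and minor, unstated assumption of the lemma.
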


\subsection{Proof of \prpref{split}}
Let $x_0$ be a regular point in $\level_t$. There exists a bounded open set $\cV\subset\bbR^d$ containing $x_0$ such that $\level_t\cap \cV$ is a $(d-1)$-dimensional surface by the Constant Rank Level Set Theorem \citep[Th 5.12]{lee2013}. 
If necessary, shrink $\cV$ so that $p(x):=\|\nabla f(x)\|>\epsilon_0$ for some $\epsilon_0>0$ for all $x\in \level_t\cap \cV$. \lemref{federer4.11} can be used to show that the set $\level_t\cap \cV$ has a positive reach, denoted by $a_0$. Denote $N(x) = \nabla f(x) / \|\nabla f(x)\|,$ which is a unit normal vector of $\level_t\cap \cV$ at $x$. For a fixed $\delta_0\in(0,a_0),$ define $\cW(\delta_0)=\{x+u N(x): x\in \level_t\cap \cV, u\in[-\delta_0,\delta_0]\}$, and $b_0 := \max\big\{\|\nabla^2 f(y)\| : y \in \cW(\delta_0)\big\}$. Note that $b_0 < \infty$ by the fact that $\nabla^2 f$ is continuous on $\bbR^d$ and $\cW(\delta_0)$ is bounded.
A Taylor expansion from $x\in\level_t\cap \cV$ in the direction of $N(x)$ gives
\begin{equation}
\label{Directional_Taylor}
\big| f(x + u N(x)) - f(x) - u p(x)\big| \le \tfrac12 b_0 u^2,  
\end{equation}
for all $u \in [-\delta_0,\delta_0]$. 
Thus, for $u \in [0, (\epsilon_0/b_0)\wedge \delta_0]$, we have
\begin{equation}
f(x + u N(x)) 
\ge f(x) + u \epsilon_0 - \tfrac12 b_0 u^2
\ge t + \tfrac12 u \epsilon_0.
\end{equation}
Similarly, for $u \in [0, (\epsilon_0/b_0)\wedge \delta_0]$,
\begin{equation}
f(x - u N(x)) 
\le f(x) - u \epsilon_0 + \tfrac12 b_0 u^2
\le t - \tfrac12 u \epsilon_0.
\end{equation}
In other words, in a small neighborhood the two sides of the surface $\level_t\cap \cV$ have $f$ values strictly below and above $t$, respectively. This then implies that $x_0$ cannot be at the intersection of the closure of two connected components of $\up_t^\circ$.

\subsection{Proof of \thmref{projection}}

By applying \cite[Th 1]{chazal2007normal}, it suffices to show that, for $\eta > 0$ small enough,
\begin{equation}
\label{normalcomp}
d_H(\level_t, \level_{t+\eta}) < c \min({\rm reach}(\level_{t}), {\rm reach}(\level_{t+\eta})),
\end{equation}
with $c := 2-\sqrt{2}$, which guarantees that $\level_t$ and $\level_{t+\eta}$ are `normal compatible', meaning that both $P_{t, \eta}$ and $P_{t+\eta, -\eta}$ are homeomorphisms.

First, it follows from \lemref{federer4.11} that there exists $\eta_0>0$ such that 
\begin{equation}
\min({\rm reach}(\level_{t}), {\rm reach}(\level_{t+\eta}))>0, \quad \forall \eta\in[0,\eta_0].
\end{equation} 

Therefore \eqref{normalcomp} holds for $\eta > 0$ small enough if it is the case that $d_H(\level_t, \level_{t+\eta}) \to 0$ as $\eta\to0$. 
We establish this convergence in what follows.

{\em Claim: $d_H(\level_t \mid \level_{t+\eta}) \to 0$ as $\eta\to0$.} 
Recall that $p(x) = \|\nabla f(x)\|$ and define $p_0 := \min\{p(x) : x \in \level_t\}$. Note that $p_0 > 0$ by the fact that $\nabla f(x) \ne 0$ for all $x \in \level_t$, that $\nabla f$ is continuous, and that $\level_t$ is compact.
For a fixed $\delta_0>0,$ define $c_0 := \max\big\{\|\nabla^2 f(y)\| : y \in \bar\ball(\level_t, \delta_0)\big\}$, and note that $c_0 < \infty$ by the fact that $\nabla^2 f$ is continuous and $\bar\ball(\level_t, \delta_0)$ is compact.
Similar to \eqref{Directional_Taylor}, a Taylor expansion gives
\begin{equation}
\big| f(x + u N(x)) - f(x) - u p(x)\big| \le \tfrac12 c_0 u^2,  
\end{equation}
for all $u \in [-\delta_0,\delta_0]$. 
Applying this to $x \in \level_t$ and $u \in [0, (p_0/c_0)\wedge \delta_0]$ yields
\begin{equation}
f(x + u N(x)) 
\ge f(x) + u p_0 - \tfrac12 c_0 u^2
\ge t + \tfrac12 u p_0.
\end{equation}
In particular, when $\eta$ is small enough such that $2\eta/p_0 \le (p_0/c_0)\wedge \delta_0$, we have $f(x + (2\eta/p_0) N(x)) \ge t + \eta$, and by continuity of $f$ this implies that there is $u \in [0,2\eta/p_0]$ such that $f(x+uN(x)) = t+\eta$, i.e., $x+uN(x) \in \level_{t+\eta}$, forcing $\dist(x, \level_{t+\eta}) \le 2\eta/p_0$. This being uniform in $x \in \level_t$, since it is valid for $\eta \le p_0^2/2c_0$, we have that $d_H(\level_t \mid \level_{t+\eta}) \le 2\eta/p_0$, valid for $\eta$ small enough, proving the claim.

{\em Claim: $d_H(\level_{t+\eta} \mid \level_t) \to 0$ as $\eta\to0$.}
This claim can be established in a way that is completely analogous to one that lead to the previous claim.

\subsection{Proof of \prpref{singleton}}
\begin{proof}
The reach of $\cA \subset \bbR^d$ at $y\in\cA$, ${\rm reach}(\cA,y)$, is defined as the supremum over $\delta>0$ such that every point in $\ball(y, \delta)$ has a unique projection onto $\cA$ \citep{federer1959curvature}. For $r>0$, define
\begin{equation}
\pi_y(r):=\frac{\inf_{z\in\ball(y,r)}\|\nabla f(z)\|}{\sup_{z\in\ball(y,2r)}\|\nabla^2 f(z)\|}.
\end{equation}
Note that $\pi_y$ is a decreasing function of $r$. By \citep[Lem 4.11]{federer1959curvature}, for every $y\in\bbR^d$ with $\|\nabla f(y)\|\ne0$, we have
\begin{equation}
\label{reach_bound}
{\rm reach}(\cL_{f(y)},y) \ge \min\{r/2, \pi_y(r) \}, 
\end{equation}
for all $r>0$ whenever $\inf_{z\in\ball(y,r)}\|\nabla f(z)\|>0.$
 Since $\|\nabla f(x)\|>0$, there exists $\delta_0>0$ such that $\inf_{y\in\ball(x,\delta_0)}\|f(y)\|>0$, by the continuity of the gradient. Define 
\begin{equation}
\delta_1= \min\{\delta_0/4, \pi_x(\delta_0) \}>0, \quad\rm{ and } \quad\delta_2 = \delta_0/2+\delta_1\in(\delta_1,\delta_0).
\end{equation}
Note that for any $y\in\ball(x,\delta_1)$, $\pi_y(\delta_2) \ge \pi_x(\delta_0)$ because $\ball(y,\delta_2)\subset \ball(x,\delta_0)$ and $\ball(y,2\delta_2)\subset \ball(x,2\delta_0)$. Then it follows from \eqref{reach_bound} that
\begin{equation}
{\rm reach}(\cL_{f(y)},y) \ge \min\{\delta_2/2, \pi_y(\delta_2) \} \ge \delta_1.
\end{equation}
By the definition of reach, the projection from $x$ to $\cL_{f(y)}$ is unique for all $y\in\ball(x,\delta_1)$. Consequently, for every positive $\eta < \sup_{y\in \ball(x,\delta_1)} - f(x)$, the projection from $x$ to $\cL_{t+\eta}$ is unique.
\end{proof}

\subsection{Proof of \prpref{projection_infinitesimal}}

\begin{proof}
Here $x$ and $t$ are considered fixed. 
Let $P_\eta$ be short for $P_{t,\eta}$. 
It is well-known that for any non-critical point $y\in \level_s$, $\nabla f(y)$ is orthogonal to $\level_s$ and pointing inwards. When $\eta$ is small enough, $P_\eta(x) -x$ is parallel to $\nabla f(P_\eta(x))$ by \lemref{federer4.8(2)}. So we can write 
\begin{equation}
\label{Peta1}
P_\eta(x) -x = c_\eta \nabla f(P_\eta(x)),
\end{equation}
for some scalar $c_\eta$. Note that $c_\eta \to 0$ as $\eta \to 0$, since $\nabla f(x) \ne 0$. 
Using a Taylor expansion, we have for some $s_\eta\in(0,1)$,
\begin{align}\label{Peta2}
\eta 
&= f(P_\eta(x)) - f(x) \\
&= [P_\eta(x) - x]^\top \nabla f(s_\eta P_\eta(x) + (1-s_\eta) x) \nonumber\\
& = c_\eta [\nabla f(P_\eta(x))]^\top \nabla f(s_\eta P_\eta(x) + (1-s_\eta) x).
\end{align}
Extracting the expression of $c_\eta$ from \eqref{Peta2} and plugging it into \eqref{Peta1}, we get
\begin{align}
\frac{P_\eta(x) - x}\eta = \frac{\nabla f(P_\eta(x))}{[\nabla f(P_\eta(x))]^\top \nabla f(s_\eta P_\eta(x) + (1-s_\eta) x)}.
\end{align}
We then conclude by noting that, as $\eta \to 0$, $P_\eta(x) \to x$.
This implies that $\nabla f(P_\eta(x)) \to \nabla f(x)$, by the fact that $f$ is twice continuously differentiable, so that $\nabla f$ is continuous. 
And this also implies that $\nabla f(sP_\eta(x) + (1-s) x) \to \nabla f(x)$, because of the uniform continuity of $\nabla f$ on the line segment $[x, P_\eta(x)]$. 
\end{proof}

\subsection{Proof of \thmref{zeta}}
\begin{proof}
The result is essentially known, but since we were not able to locate it in this form, we provide a short proof.
Throughout, $t$ and $s$ are considered fixed.

Using elementary calculus and the definition in \eqref{gradient_flow_norm}, we have
\begin{align}
\label{level_parameterization}
f(\zeta_x(s-t)) - f(x) 
&= \int_0^{s-t} \frac{\d f(\zeta_x(\tau)) }{ \d \tau} \d \tau \nonumber\\
&= \int_0^{s-t} [\nabla f(\zeta_x(\tau))]^\top\, \dot\zeta_x(\tau) \d\tau \nonumber\\
&= \int_0^{s-t} 1\, \d\tau
= s -t.
\end{align}
Hence, $f(\zeta_x(s-t)) = f(x) + s-t = s$, giving the first part of the statement. 

Let $\psi(x) := \zeta_x(s-t)$. We show that $\psi:\level_t \to \level_s$ is a homeomorphism when there are no critical points in $\level_u$ for any $u\in[t,s]$, or equivalently, no critical point in $\cV := \bigcup_{u\in[t,s]} \level_u = \up_t \setminus \up_s^\circ$.
Under this condition, the trajectories of $\{\zeta_{x}(\tau): \tau\in[0,s-t]\}$ and $\{\zeta_{y}(\tau): \tau\in[0,s-t]\}$ do not intersect for any two different starting points $x,y\in\level_t$, as is well-known. This implies that the map $\psi:\level_t \to \level_s$ is injective. 
For any $z\in\level_s$, consider the gradient {\em descent} flow ($\zeta$ in reverse) given by
\begin{equation} \label{gradient_flow_norm_reverse}
\begin{gathered}
\zeta^\downarrow_z(0) = z; \;\;\;
\dot\zeta^\downarrow_z(t) = 
\begin{cases}
-\frac{\nabla f(\zeta^\downarrow_z(t))}{\|\nabla f(\zeta^\downarrow_z(t))\|^2}, & \text{if } \nabla f(\zeta^\downarrow_z(t)) \ne 0; \\
0, & \text{otherwise}.
\end{cases}
\end{gathered}
\end{equation} 
Let $x = \psi^\downarrow(z) := \zeta^\downarrow_z(s-t)$. It follows from a calculation similar to \eqref{level_parameterization} that $x\in\level_t$. Notice that $z=\zeta_{x}(s-t)=\psi(x)$, so that we can write $x=\psi^{-1}(z)$, and therefore the map $\psi:\level_t \to \level_s$ is surjective. In the process, we have shown that the inverse of the gradient ascent operation --- the function $\psi$ defined via $\zeta$ in \eqref{gradient_flow_norm} --- is given by the corresponding gradient descent operation --- the function $\psi^\downarrow$ defined via $\zeta^\downarrow$ in \eqref{gradient_flow_norm_reverse}.

The proof is completed after we show that $\psi$ is continuous. (The continuity of $\psi^{-1} = \psi^\downarrow$ can be proved in a similar way.)
We start by stating that, because $\nabla f(x) \ne 0$ for all $x \in \cV$, and by the assumption that $\nabla f$ is continuous, there is $\delta>0$ such that $\|\nabla f(x)\|>0$ for all $x\in \cV_\delta := \bar\ball(\cV, \delta)$.
Define $F :=\nabla f/\|\nabla f\|^2$, which is the map that drives the gradient flow $\zeta$ in \eqref{gradient_flow_norm}.
This map is continuous, and $\cV_\delta$ is compact, so that there is $c_0$ such that $\|F(x)\| \le c_0$ for all $x \in \cV_\delta$.
The map is also differentiable, with
\begin{equation}
\nabla F = \frac{\nabla^2 f}{\|\nabla f\|^2} - \frac{2\nabla f \nabla f^\top \nabla^2 f}{\|\nabla f\|^4},
\end{equation}
where $\nabla F$ denotes here the Jacobian matrix of $F$, while $\nabla^2 f$ denotes the Hessian of $f$. And by continuity of $\nabla f$ and of $\nabla^2 f$, and the fact that $\|\nabla f(x)\|>0$ for all $x\in \cV_\delta$, and again the fact that $V_\delta$ is compact, there exists $c_1$ such that $\|\nabla F(x)\| \leq c_1$ for all $x\in\cV_\delta$, where $\|\cdot\|$ denotes any matrix norm. In other words, $F$ is bounded and has bounded gradient on $\cV_\delta$.

Define $\cW := \ball(\cV, \delta/2)$.
Note that $\cW$ is open with $\cW \subset \cV_\delta$, so that $\|F(x)\| \le c_0$ and $\|\nabla F(x)\| \leq c_1$ for all $x \in \cW$.
If $x, y \in \cW$ are such that $\|x-y\| \le \delta/4$, there must be $z\in\cV$ such that $x, y \in \ball(z, \delta/2)$, and because that ball is convex, we have
\begin{equation}
\|F(x) - F(y)\| 
\le c_1 \|x-y\|.
\end{equation}
If, on the other hand, $\|x-y\| > \delta/4$, then we can simply write
\begin{equation}
\|F(x) - F(y)\| 
\le 2 c_0
= \frac{2 c_0}{\delta/4} \delta/4
\le \frac{8 c_0}{\delta} \|x-y\|.
\end{equation}
Hence, $F$ is Lipschitz on $\cW$ with corresponding constant bounded by $C := \max\{c_1, 8c_0/\delta\}$.
Finally, note that $\cV \subset \cW$ and that for any $x \in \level_t$, $\zeta_x(\tau) \in \cV$ for all $\tau \in [0,s-t]$, since $f(\zeta_x(\tau)) = t + \tau \in [t, s]$.
All together, we are in a position to apply a standard result on the dependence of the gradient flow on the initial condition, for example, the main theorem in \citep[Sec 17.3]{hirsch2012differential}, which gives the bound
\begin{equation}
\begin{gathered}
\|\zeta_x(\tau) - \zeta_y(\tau)\|
\le \|x-y\| \exp(C \tau), \quad
\forall \tau \in [0,s-t], \quad \forall x,y \in \level_t,
\end{gathered}
\end{equation}
which, in particular, implies
\begin{equation}
\label{psi_diff_bound}
\begin{gathered}
\|\psi(x) - \psi(y)\|
\le C_* \|x-y\|, \quad
\forall x,y \in \level_t,
\end{gathered}
\end{equation}
with $C_* := \exp(C (s-t))$.
\end{proof}

\section{\uppercase{Finer results}}
\label{sec:finer}

In this section we state and prove stronger versions of our main results, namely, \thmref{projection} and \thmref{zeta}. In these results, we prove that transformation, respectively, a metric projection onto a close enough level set and a gradient flow, is a homeomorphism. While this was enough for our purposes, below we prove that, in fact, the same transformations are diffeomorphisms.

\begin{thm} 
\label{thm:diffeomorphism}
In the present context, for $\eta > 0$ small enough, the metric projection onto $\level_{t+\eta}$ is a diffeomorphism sending $\level_t$ to $\level_{t+\eta}$. 
\end{thm}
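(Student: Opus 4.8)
The plan is to upgrade the homeomorphism statement of \thmref{projection} to a diffeomorphism by showing that both the metric projection $P_{t,\eta}$ and its inverse are continuously differentiable as maps between the relevant level sets. I should note at the outset that, since $f$ is only assumed twice continuously differentiable, the level sets in play are $C^2$ submanifolds and the best regularity one can hope for $P_{t,\eta}$ is $C^1$; accordingly, ``diffeomorphism'' here is to be read as $C^1$-diffeomorphism (this is also consistent with \prpref{projection_infinitesimal}, which already exhibits the first-order behaviour of $P_{t,\eta}$). The key analytic input is the classical fact that the nearest-point projection onto a $C^2$ submanifold $M\subset\bbR^d$ of positive reach is $C^1$ on the open tubular neighborhood $\ball(M,{\rm reach}(M))$; this follows from the regularity theory of the distance function (Federer, and refined by Foote), and I will also indicate how to derive it directly from the inverse function theorem.

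First I would check that the hypotheses of that fact are met in our setting. Since $f$ is $C^2$ and Morse inside its support, its critical points are isolated, so the standing assumption that there are no critical points at level $t$ implies, by continuity of $f$ and $\nabla f$, that there is $\eta_1>0$ with no critical points at any level in $[t,t+\eta_1]$; by the Constant Rank Level Set Theorem, $\level_t$ and $\level_{t+\eta}$ are then compact $C^2$ embedded $(d-1)$-submanifolds for $\eta\in[0,\eta_1]$, and by \lemref{federer4.11} they have positive reach. Next I would invoke the Hausdorff estimate already established inside the proof of \thmref{projection}, namely $d_H(\level_t,\level_{t+\eta})\le 2\eta/p_0\to0$, to conclude that for $\eta$ small enough $\level_t\subset\ball\big(\level_{t+\eta},{\rm reach}(\level_{t+\eta})\big)$; hence $P_{t,\eta}$ is well defined and $C^1$ on an open neighborhood of $\level_t$. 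By \thmref{projection} its restriction to $\level_t$ is a bijection onto $\level_{t+\eta}$, and by the normal-compatibility property invoked there its inverse is the restriction to $\level_{t+\eta}$ of $P_{t+\eta,-\eta}$, which by the same reasoning with the roles of $t$ and $t+\eta$ interchanged is $C^1$ on an open neighborhood of $\level_{t+\eta}$. A $C^1$ bijection between $C^2$ manifolds whose inverse is also $C^1$ is a $C^1$-diffeomorphism, which is the assertion.

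For the direct justification of the $C^1$-regularity of the projection (to avoid relying on a citation), I would set $N=\nabla f/\|\nabla f\|$, which is $C^1$ on a neighborhood of $\level_{t+\eta}$ because $f\in C^2$ and $\nabla f\ne0$ there, and consider the normal map $\Phi(p,u):=p+uN(p)$ for $p\in\level_{t+\eta}$ and $|u|$ small. This $\Phi$ is $C^1$, and its differential at $(p,0)$ is invertible --- on the normal direction it is the identity, and on the tangent space of $\level_{t+\eta}$ at $p$ it is $I$ minus $u$ times the shape operator, which equals $I$ at $u=0$ --- so by the inverse function theorem $\Phi$ is a local $C^1$-diffeomorphism near $\level_{t+\eta}\times\{0\}$. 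Composing $\Phi^{-1}$ with the projection onto the first coordinate gives a $C^1$ map, which by \lemref{federer4.8(2)} (the projection is attained along the normal direction) coincides with the metric projection $P_{t,\eta}$ once $\eta$ is small; compactness of $\level_{t+\eta}$ together with positivity of the reach makes the neighborhood uniform.

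The genuine content here --- and the step I expect to be the main obstacle --- is precisely this $C^1$-regularity of the nearest-point projection onto a level set; it hinges on the level sets being $C^2$ (so that the unit normal field is $C^1$) and on turning the local tubular-neighborhood picture into a uniform one via positive reach and compactness. Everything else (existence of a small enough $\eta$, the Hausdorff estimate, bijectivity, and the identification of the inverse with $P_{t+\eta,-\eta}$) is already in hand from \thmref{projection} and its proof, so the argument is mainly a matter of assembling these ingredients and being careful about the $C^1$ (rather than $C^\infty$) meaning of ``diffeomorphism''.
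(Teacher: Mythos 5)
Your proposal is correct, and it takes a genuinely different route than the paper. You prove the $C^1$-regularity of the metric projection by appealing to the classical tubular-neighborhood picture: set up the normal map $\Phi(p,u)=p+uN(p)$ on $\level_{t+\eta}\times(-\epsilon,\epsilon)$, observe that it is $C^1$ because $f\in C^2$ forces the normal field $N=\nabla f/\|\nabla f\|$ to be $C^1$, check that its differential is invertible at $u=0$, apply the inverse function theorem, and then note (via \lemref{federer4.8(2)}) that the metric projection is the first component of $\Phi^{-1}$. Injectivity of $\Phi$ on a uniform tube follows from compactness and positive reach, and the inverse map is $P_{t+\eta,-\eta}$ by normal compatibility, giving a $C^1$-diffeomorphism. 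The paper proceeds quite differently: it parameterizes the inverse map explicitly as $Q(x)=x-\tau(x)N(x)$ with $\tau(x)=\inf\{s>0 : f(x-sN(x))=t\}$, extends $Q$ to a thin open slab, establishes the continuity and then the continuous differentiability of $\tau$ through a long telescoping Taylor-expansion computation, derives an explicit formula for $\nabla\tau$, verifies by hand that $\nabla Q$ is invertible between tangent spaces when $\eta$ is small, and only then invokes the inverse function theorem. Your argument is shorter, more conceptual, and leans on standard differential-geometric machinery; the paper's computation is more elementary and self-contained, does not require importing the tubular-neighborhood theorem, and its explicit formulas set the style that the paper then reuses for the companion result \thmref{diffeomorphism2}. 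Both establish the same $C^1$-diffeomorphism, and your remark that ``diffeomorphism'' must be read as $C^1$ (because $f$ is only assumed $C^2$) is correct and worth making explicit, since the paper leaves it implicit.
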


\begin{proof}
In \thmref{projection} we have shown that the metric projection is an homeomorphism for $\eta$ small enough. Consider its inverse, denoted as $Q$, which is a homeomorphism sending $\level_{t+\eta}$ to $\level_t$. By definition, for any $x\in\level_{t+\eta}$, 
\begin{equation}
\label{Q_def}
Q(x) = x- \tau(x) N(x) ,
\end{equation} 
where $N(x) := \nabla f(x) / \|\nabla f(x)\|$ is a unit normal vector of the surface $\level_{t+\eta}$ at $x$, while $\tau(x) := \inf\{s>0: f(x - sN(x)) = t\}$. 
The fact that the projection is a homeomorphism, as established in \thmref{projection}, guarantees that for $\eta$ small enough, there exists $\beta\in(0,\eta)$ such that the definition of $Q$ can be naturally extended (in the same way as in \eqref{Q_def}) to the set 
\begin{equation}
\cG:= \bigcup_{u\in(t+\eta - \beta,t+\eta+\beta)} \level_u = \up_{t+\eta-\beta}^\circ \setminus \up_{t+\eta + \beta},
\end{equation}
such that $\inf_{x\in \cG}\|\nabla f(x)\|>c_0$ for some $c_0>0$. We still denote this extension by $Q$.

We show that $Q$ is continuously differentiable on $\cG$. The problem is reduced to the continuous differentiability of $\tau$, because \[\nabla N = \|\nabla f\|^{-1}[\I - NN^\top]\nabla^2 f,\]
which is continuous under the assumption that $f$ is twice continuously differentiable. For any fixed point $x\in\cG$ and any $s\in\bbR$, denote $g_x(s) = f(x - sN(x))$. Then
\begin{align}
\label{dot_gx}
\dot g_x(s) & = - N(x)^\top \nabla f(x - sN(x)) \nonumber \\
& = - N(x)^\top \Big[\nabla f(x) - s \int_0^1 \nabla^2 f(x - usN(x)) \d u \; N(x) \Big] \nonumber\\
& = - \|\nabla f(x)\| + s N(x)^\top \Big[\int_0^1 \nabla^2 f(x - usN(x)) \d u \Big] N(x).
\end{align} 
Hence under the assumption that $f$ is twice continuously differentiable, there exists $s_0>0$ such that 
\begin{equation}
\label{dot_gx_bound}
\dot g_x(s) < - \frac{1}{2}\|\nabla f(x)\| < -\frac{1}{2}c_0<0,
\end{equation}
for all $s\in[0,s_0]$, which means that $g_x$ is strictly decreasing in $[0,s_0]$ and $g_x^{-1}$ exists on $(g_x(s_0), g_x(0))$. Furthermore, using a Taylor expansion, we have that, for $x\in\level_{t+\alpha}$ with $\alpha\in(\eta-\beta,\eta+\beta)\subset(0,2\eta)$, and for $s\in[0,s_0]$,
\begin{align}
g_x(s) = g_x(0) + s \dot g_x(\tilde s) < (t+\alpha) -\frac{s}{2} \|\nabla f(x)\|, 
\end{align}
where $\tilde s\in[0,s] \subset [0,s_0]$. If $\eta < \frac{1}{4} s_0 c_0$, then 
\begin{equation}
\label{gxs_bound}
g_x(s_0) < t + 2\eta - \frac{s_0}{2} c_0 < t.
\end{equation}
Therefore $t\in (g_x(s_0), g_x(0))$ for $\eta$ small enough, and we can write
\begin{equation}
\tau(x) = g_x^{-1}(t).
\end{equation}
It also follows from \eqref{gxs_bound} that for $x\in\cG$,
\begin{equation}
\label{tau_bound}
\tau(x) < s_0 < \frac{4\eta}{c_0}.
\end{equation}

Below we show that $\tau$ is continuous on $\cG$. For $x\in\cG$, which is an open subset of $\bbR^d$, consider $\delta\in\bbR^d$ with $\|\delta\|$ small enough such that $B(x,\|\delta\|) \subset \cG$. By the definition of $\tau$, 
\begin{align}
f(x - \tau(x) N(x)) = t = f(x+\delta - \tau(x+\delta) N(x+\delta)).
\end{align}
Applying the mean value theorem on the two sides of the above equation, we get
\begin{align}
& f(x) -  \tau(x) N(x)^\top \int_0^1 \nabla f(x - u\tau(x)N(x))\d u \\
&= f(x+\delta) -  \tau(x+\delta) N(x+\delta)^\top \int_0^1 \nabla f(x + \delta - u \tau(x+\delta)N(x+\delta)) \d u.
\end{align}
Rearranging the terms above and using a telescoping argument, we have
\begin{align}
\label{telescoping}
& f(x) - f(x+\delta) \nonumber\\
&= [ \tau(x) -  \tau(x+\delta)] N(x)^\top \int_0^1 \nabla f(x - u\tau(x)N(x))\d u \nonumber\\
& \quad +   \tau(x+\delta) [ N(x) - N(x + \delta)]^\top \int_0^1 \nabla f(x - u\tau(x)N(x))\d u \nonumber\\
& \quad +  \tau(x+\delta) N(x + \delta)^\top \int_0^1 [\nabla f(x - u\tau(x)N(x)) - \nabla f(x - u \tau(x+\delta)N(x)) ]\d u \nonumber\\
& \quad +  \tau(x+\delta) N(x + \delta)^\top \int_0^1 [\nabla f(x - u\tau(x+\delta)N(x)) - \nabla f(x - u \tau(x+\delta)N(x+\delta)) ]\d u \nonumber\\
& \quad +  \tau(x+\delta) N(x + \delta)^\top \int_0^1 [\nabla f(x - u \tau(x+\delta)N(x+\delta)) - \nabla f(x + \delta - u \tau(x+\delta)N(x+\delta)) ]\d u \nonumber\\
&=: I_1 + I_2 + I_3 +I_4 + I_5.
\end{align} 
Recalling the expression of $\dot g_x$ in \eqref{dot_gx}, we can write for some $\tilde s(x)\in (0,\tau(x))$, 
\begin{align}
I_1 = - \dot g_x(\tilde s(x)) [\tau(x) - \tau(x+\delta)].
\end{align}
To study $I_2, I_3,I_4$, and $I_5$, we use further Taylor expansions. (Remember that $f$ is assumed twice continuously differentiable.) 
Doing that leads to
\begin{align}
I_2 & = - \tau(x+\delta) \delta^\top \Big[\nabla N(x)^\top \int_0^1 \nabla f(x - u\tau(x)N(x))\d u \Big] + o(\|\delta\|) \\
& =: -\tau(x+\delta) \delta^\top r_1(x) + o(\|\delta\|).
\end{align}
Similarly, 
\begin{align}
I_3 & = - \tau(x+\delta) [\tau(x) - \tau(x+\delta)] \\
&\hspace{1cm} \times N(x + \delta)^\top \Big[\int_0^1 \int_0^1 u\nabla^2 f(x - u(w\tau(x) + (1-w) \tau(x+\delta))N(x)) \d u \d w \Big] N(x),\\
I_4 & = \tau^2 (x+\delta) \delta^\top \nabla N(x)^\top\\
&\hspace{1cm}  \Big[\int_0^1 \int_0^1 u \nabla^2 f(x - u\tau(x+\delta)(wN(x) + (1-w)N(x+\delta))) \d u \d w \Big] N(x + \delta) + o(\|\delta\|),\\
I_5 & = - \tau(x+\delta) \delta^\top \Big[\int_0^1 \nabla^2 f(x - u\tau(x+\delta)N(x+\delta)) \d u\Big] N(x + \delta) + o(\|\delta\|).
\end{align}
Notice that among all the terms in \eqref{telescoping}, $I_2$, $I_4$ and $I_5$ are all of order $O(\|\delta\|)$, and
\begin{align}
f(x) - f(x + \delta) = - \delta^\top \nabla f(x) + o(\|\delta\|) = O(\|\delta\|),
\end{align}
while only $I_1$ and $I_3$ contain the factor $[\tau(x) - \tau(x+\delta)]$. It follows from \eqref{dot_gx_bound} and \eqref{tau_bound} that  
\begin{align}
& |I_1| > \frac{1}{2}c_0 |\tau(x) - \tau(x+\delta)|,\\
& |I_3| < \frac{4\eta}{c_0} \lambda |\tau(x) - \tau(x+\delta)|,
\end{align}
where $\lambda:=\sup_{x\in\cV}\|\nabla^2 f(x)\|$. When $\eta < \frac{1}{16}\lambda$,  we have
\begin{align}
O(\|\delta\|) = |[f(x) - f(x+\delta)] - I_2 - I_4 - I_5| = |I_1 + I_3| \geq |I_1| - |I_3| > \frac{1}{4} c_0 |\tau(x + \delta) - \tau(x)|,
\end{align}
and therefore $|\tau(x + \delta) - \tau(x)| \to 0$ as $\|\delta\|\to 0$, that is, $\tau$ is continuous on $\cG$. 

We proceed to show $\tau$ is continuously differentiable. 
Using the shorthand 
\begin{align}
& r_2(x) = \int_0^1 u \nabla^2 f(x - u\tau(x) N(x)) \d u N (x) , \\
& r_3(x) = \int_0^1 \nabla^2 f(x - u\tau(x) N(x)) \d u N (x),
\end{align}
and the fact $\tau(x+\delta) = \tau(x) + o(\|\delta\|)$, we can further write
\begin{align}
&I_2 = -\tau(x) \delta^\top r_1(x) + o(\|\delta\|),\\
&I_3 = - \tau(x) [\tau(x) - \tau(x+\delta)] N(x)^\top r_2(x) + o(\|\delta\|),\\
& I_4 = \tau^2 (x) \delta^\top \nabla N(x)^\top r_2(x) + o(\|\delta\|),\\
&I_5 = -\tau(x) \delta^\top  r_3(x) + o(\|\delta\|).
\end{align}
Putting all these together into \eqref{telescoping}, we obtain
\begin{align}
& [- \dot g(\tilde s(x)) - \tau(x) N(x)^\top r_2(x)][\tau(x) - \tau(x+\delta)] \nonumber\\
&= \delta^\top [-  \nabla f(x) + \tau(x)  r_1(x) + \tau(x) r_3(x)  - \tau^2(x) \nabla N(x)^\top r_2(x)] + o(\|\delta\|).
\end{align}
Again it follows from \eqref{dot_gx_bound} and \eqref{tau_bound} that $- \dot g(\tilde s(x)) - \tau(x) N(x)^\top r_2(x)>0$ when $\eta$ is small enough, and the above approximation becomes
\begin{align}
\tau(x+\delta) - \tau(x) = \delta^\top a(x)+ o(\|\delta\|),
\end{align}
where
\begin{align}
a(x) := \frac{\nabla f(x) - \tau(x) [r_1(x) + r_3(x)]  + \tau^2(x)\nabla N(x)^\top  r_2(x)}{- \dot g(\tilde s(x)) - \tau(x) N(x)^\top r_2(x) },
\end{align}
This implies that derivatives of $\tau$ exist, and $\nabla \tau(x) = a(x)$, which is continuous. 
Let $\cT_x\cL_{t+\eta}$ be the tangent space of $\cL_{t+\eta}$ at $x$. For any $v\in\cT_x\cL_{t+\eta}\setminus\{0\}$, 
\begin{align}
[\nabla Q(x) - \I]v = \tau(x) \Big[ \nabla N(x) - N(x) \frac{- [r_1(x) + r_3(x)]^\top  + \tau(x)\nabla r_2(x)^\top N(x)  }{- \dot g(\tilde s(x)) - \tau(x) N(x)^\top r_2(x) } \Big] v,
\end{align}
where we have used the fact $\nabla f(x)^\top v =0$. 
Hence by \eqref{tau_bound}, $\| [\nabla Q(x) - \I]v \| < \|v\|$ for $\eta$ small enough, which then implies $\nabla Q(x) v_1 \neq \nabla Q(x) v_2$ for any $v_1,v_2\in\cT_x\cL_{t+\eta}$ such that $v_1 \neq v_2,$ that is, $\nabla Q(x)$ is invertible as a linear map from $\cT_x\cL_{t+\eta}$ to $\cT_{Q(x)} \cL_{t}$. By the Inverse Function Theorem, $Q$ is a local diffeomorphism. Since $Q$ has been proved to be a homeomorphism, it is also a diffeomorphism. We then conclude that the metric projection from $\cL_t$ to $\cL_{t+\eta}$, as the inverse of $Q$, is also a diffeomorphism. 
\end{proof}

\begin{thm}
\label{thm:diffeomorphism2}
The transformation $x \mapsto \zeta_x(s-t)$ is a diffeomorphism from $\level_t$ to $\level_s$ whenever there are no critical points at any level anywhere between $t$ and $s$, inclusive.
\end{thm}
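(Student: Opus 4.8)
The plan is to upgrade the homeomorphism of \thmref{zeta} to a diffeomorphism by recognizing the transformation $\psi(x) := \zeta_x(s-t)$ as the restriction to $\level_t$ of the time-$(s-t)$ map of an honest $C^1$ flow on $\bbR^d$, and then invoking the classical theorem on $C^1$ dependence of ODE solutions on initial conditions (see, e.g., \citep[Ch.~17]{hirsch2012differential}), which is the differentiable analogue of the Lipschitz-dependence result already used in the proof of \thmref{zeta}. Since $f$ is only assumed twice continuously differentiable, no additional regularity is available or needed: $\nabla f \in C^1$, which is exactly what makes $F := \nabla f/\|\nabla f\|^2$ a $C^1$ vector field away from critical points.

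First I would localize and regularize the driving vector field exactly as in the proof of \thmref{zeta}. With $\cV := \up_t \setminus \up_s^\circ = \bigcup_{u\in[t,s]} \level_u$, the hypothesis gives $\nabla f \ne 0$ on $\cV$, hence, by continuity and compactness of $\cV$, also $\nabla f \ne 0$ on $\cV_\delta := \bar\ball(\cV, \delta)$ for some $\delta > 0$; on $\cV_\delta$ the field $F$ is $C^1$ with bounded Jacobian (the expression for $\nabla F$ was already computed there). Next I would multiply $F$ by a smooth cutoff equal to $1$ on $\cW := \ball(\cV, \delta/2)$ and supported in $\cV_\delta$, obtaining a compactly supported $C^1$ vector field $\widetilde F$ on all of $\bbR^d$. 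Being complete, $\widetilde F$ generates a flow $(\Phi_\tau)_{\tau \in \bbR}$ of $C^1$ diffeomorphisms of $\bbR^d$, with $\Phi_\tau^{-1} = \Phi_{-\tau}$ and with $(\tau, x) \mapsto \Phi_\tau(x)$ of class $C^1$.

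The identification step is then routine. For $x \in \level_t$, \eqref{level_parameterization} gives $f(\zeta_x(\tau)) = t + \tau$ for $\tau \in [0, s-t]$, so the whole trajectory stays in $\cV \subset \cW$, where $\widetilde F = F$; by uniqueness of ODE solutions, $\Phi_\tau(x) = \zeta_x(\tau)$ there, whence $\psi = \Phi_{s-t}|_{\level_t}$. The same argument applied to the descent flow $\zeta^\downarrow$ of \eqref{gradient_flow_norm_reverse} shows $\psi^{-1} = \psi^\downarrow = \Phi_{-(s-t)}|_{\level_s}$, consistent with \thmref{zeta}. Finally, since $\level_t$ and $\level_s$ contain no critical points, the implicit function theorem makes them $C^1$ (indeed $C^2$) embedded hypersurfaces, and $\Phi_{s-t}$ is an ambient $C^1$ diffeomorphism with $\Phi_{s-t}(\level_t) = \level_s$ by \thmref{zeta}; hence its restriction $\psi : \level_t \to \level_s$ is $C^1$, with $C^1$ inverse $\Phi_{-(s-t)}|_{\level_s}$ — that is, a diffeomorphism. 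Equivalently, differentiating $\psi^{-1}\circ\psi = \mathrm{id}$ shows $d\psi_x$ is a linear isomorphism of $T_x\level_t$ onto $T_{\psi(x)}\level_s$ at every $x$.

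I do not expect a serious obstacle here, since the argument is essentially a clean application of smooth ODE dependence layered on top of \thmref{zeta}. The main points requiring care will be two bookkeeping issues: handling the blow-up of $F$ at critical levels — dealt with by staying inside $\cV_\delta$ and cutting off, so that the flow theorem is applied to a genuine $C^1$ vector field — and passing from a diffeomorphism of the ambient space to a diffeomorphism of the level-set hypersurfaces, which requires knowing the level sets are $C^1$ submanifolds (true here) together with the elementary fact that a $C^1$ map into $\bbR^d$ whose image lies in a $C^1$ submanifold is $C^1$ as a map into that submanifold.
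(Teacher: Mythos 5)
Your proposal is correct, and it takes a genuinely different and more economical route than the paper's proof. The paper works locally: it extends $\psi$ to a collar $\cH$ around $\level_t$ via the \emph{adaptive} time map $\psi(x)=\zeta_x(s-f(x))$, and then computes $\nabla\psi$ by hand, decomposing $\psi(x+\delta)-\psi(x)$ into two increments, invoking \citep[Thm 18, Ch 4]{pontriagin1962ordinary} for differentiable dependence on initial conditions, and invoking \citep[Cor 6, p.~246]{cooke1992ordinary} (nonsingularity of the fundamental solution of a linear ODE system) to conclude that the restriction of $\nabla\psi$ to the tangent space $\cT_x\level_t$ is injective, before finishing with the inverse function theorem. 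You instead globalize: cut off $F=\nabla f/\|\nabla f\|^2$ to a compactly supported $C^1$ vector field $\widetilde F$ on all of $\bbR^d$, observe it generates a one-parameter group of ambient $C^1$ diffeomorphisms $\Phi_\tau$ that agree with the $\zeta$-flow on trajectories that stay in $\cV$, and then restrict the ambient diffeomorphism $\Phi_{s-t}$ to the embedded $C^1$ hypersurfaces $\level_t\to\level_s$. This buys you two things at once for free: the $C^1$ regularity of $\psi$ and $\psi^{-1}$ (no Taylor bookkeeping needed), and the tangential nonsingularity of $d\psi_x$ (an ambient diffeomorphism taking $\level_t$ onto $\level_s$ automatically restricts to a linear isomorphism $T_x\level_t\to T_{\psi(x)}\level_s$), so you need neither the inverse function theorem nor the linear-ODE nonsingularity lemma. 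What the paper's more pedestrian argument offers in exchange is an explicit formula for $\nabla\psi$; also, its extension via $\zeta_x(s-f(x))$ avoids introducing a cutoff and keeps the constructions intrinsic to the flow. One small point to make explicit in your write-up: the identification $\Phi_\tau|_{\cV}=\zeta_\cdot(\tau)|_{\cV}$ for $\tau\in[0,s-t]$ does require the trajectory to remain in the region where the cutoff is $1$, which is exactly what the level-parameterization $f(\zeta_x(\tau))=t+\tau$ from \eqref{level_parameterization} guarantees, as you note.
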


\begin{proof}
For some $\beta\in(0,s-t)$ small enough, consider an open subset of $\bbR^d$
\begin{equation}
\cH:= \bigcup_{u\in(t - \beta,t+\beta)} \level_u = \up_{t-\beta}^\circ \setminus \up_{t + \beta},
\end{equation}
such that $\inf_{x\in \bar{\cH}}\|\nabla f(x)\|>c_0$ for some $c_0>0$. We extend $\psi$ defined in the proof of \thmref{zeta} to $\cH$ in the following way: for $x\in \cH$, define
\begin{equation}
\psi (x) = \zeta_{x}(s-f(x)).
\end{equation}
Note that $\psi(x)\in \cL_s$ using the property of $\zeta$ stated in \thmref{zeta}. Consider $\delta\in\bbR^d$ with $\|\delta\|$ small enough such that $B(x,\|\delta\|) \subset \cH$, and $|f(x)-f(x+\delta)|<s-t$. Without loss of generality, suppose that $f(x+\delta)\le f(x)$. We can write
\begin{align}
\label{psi_delta_diff}
\psi(x + \delta) - \psi(x) &= [\zeta_{x+\delta}(s-f(x+\delta)) - \zeta_{x+\delta}(s-f(x))] + [\zeta_{x+\delta}(s-f(x)) - \zeta_{x}(s-f(x))] \nonumber\\
&=: I_1 + I_2.
\end{align}
For $I_1$, we have
\begin{align}
I_1 & = \Big[(x+\delta) + \int_{0}^{s-f(x+\delta)} F(\zeta_{x+\delta}(u)) du\Big] - \Big[(x+\delta) + \int_{0}^{s-f(x)} F(\zeta_{x+\delta}(u)) du\Big] \\
& = \int_{s-f(x)}^{s-f(x+\delta)} F(\zeta_{x+\delta}(u)) du.
\end{align}
Recall that $\cV= \up_t \setminus \up_s^\circ$. Note that for any $u\in[s-f(x), s-f(x+\delta)]$, we have $\zeta_{x+\delta}(u)\in\cV$, and hence
\begin{align}
\|F(\zeta_{x+\delta}(u)) - F(\zeta_{x+\delta}(s-f(x)))\| &= \Big\|\int_{s-f(x)}^{u}\nabla F(\zeta_{x+\delta}(w))\dot\zeta_{x+\delta}(w)\d w\Big\| \\
&\le \sup_{x\in \cV} \|\nabla F(x) F(x)\| |u- (s-f(x))| \\
&\le \sup_{x\in \cV} \|\nabla F(x) F(x)\| |f(x+\delta) - f(x)|\\
&\le \sup_{x\in \cV} \|\nabla F(x) F(x)\| \sup_{x\in\bbR^d}\|\nabla f(x)\| \|\delta\|.
\end{align}
Under the assumption that $f$ is twice continuously differentiable, using the same argument for \eqref{psi_diff_bound}, we can show that $\|\zeta_{x+\delta}(s-f(x)) - \zeta_{x}(s-f(x))\| = O(\|\delta\|)$, and hence $\|F(\zeta_{x+\delta}(s-f(x))) - F(\zeta_{x}(s-f(x)))\| = O(\|\delta\|)$. Therefore
\begin{align}
\label{I1_approximation}
I_1 & = [F(\zeta_{x}(s-f(x))) + O(\|\delta\|)] [f(x) - f(x+\delta)] \nonumber\\
&= [F(\zeta_{x}(s-f(x)))\nabla f(x)^\top]\delta  + o(\|\delta\|) \nonumber\\
&= : A(x) \delta  + o(\|\delta\|).
\end{align}
For $I_2$, we use \citep[Thm 18, Ch 4]{pontriagin1962ordinary} and get
\begin{align}
\label{I2_approximation}
I_2 = B(x)\delta + o(\|\delta\|),
\end{align}
where $B(x)=[b_1(x),\cdots,b_d(x)]$ is a $d\times d$ matrix. Here for $i=1,\cdots,d$, $b_i(x) = \xi_{i}(s-f(x))$, where $\xi_i$ satisfies
\begin{align}
\dot \xi_{i}(t) = \nabla F(\zeta_x(t)) \xi_{i}(t),\; t\in[0, s-f(x)]; \; \xi_{i}(0) = e_i,
\end{align}
where $e_i$ is a $d$-dimensional vector with the $i$th component 1 and all other components 0. Note that $B$ is continuous, because both $\xi_i$ and $f$ are continuous. 

Hence it follows from \eqref{psi_delta_diff}, \eqref{I1_approximation}, and \eqref{I2_approximation} that $\psi$ is differentiable with $\nabla \psi(x) = A(x) + B(x)$, which is continuous. For $x\in\cL_t$ and any $v\in\cT_x\cL_t\setminus\{0\}$, $\nabla \psi(x) v = B(x)v$ because $\nabla f(x)^\top v =0$. Note that $B(x)v = \xi(s-f(x))$, where $\xi$ satisfies
\begin{align}
\dot \xi(t) = \nabla F(\zeta_x(t)) \xi(t),\; t\in[0, s-f(x)]; \; \xi(0) = v.
\end{align}
This is a system of nonautonomous linear ODEs, for which \cite[Corollary 6, page 246]{cooke1992ordinary} guarantees that $\xi(s-f(x))\neq 0$. This then implies $\nabla \psi(x) v_1 \neq \nabla \psi(x) v_2$ for any $v_1,v_2\in\cT_x\cL_{t}$ such that $v_1 \neq v_2$. We conclude the proof by using the same argument as the last part of the proof of \thmref{diffeomorphism}.
\end{proof}


\subsection*{Acknowledgments}
WQ's work was partially supported by NSF Grant No. 1821154.

\bibliographystyle{chicago}
\bibliography{ref}

\end{document}